\newtheorem{theorem}{Theorem}[section]
\newtheorem{corollary}[theorem]{Corollary}
\newtheorem{lemma}[theorem]{Lemma}
\newtheorem{proposition}[theorem]{Proposition}
\newtheorem{definition}[theorem]{Definition}
\newtheorem{example}[theorem]{Example}
\title{Translation Invariant Operators on Polyanalytic Sobolev-Fock Spaces}
\author{
  Henry McNulty 
  }
\begin{document}

\maketitle
\begin{abstract}
    We examine translation invariant operators on the Polyanalytic Sobolev-Fock spaces and show that they take the form
    \begin{align*}
        S_{\phi} F(z) =  \int_{\mathbb{C}^n} F(w)e^{\pi z\cdot \overline{w}}\phi(w-z,\overline{w}-z) e^{\pi |w|^2}\, dw
    \end{align*}
    for certain $\phi$, using tools from time-frequency analysis. This extends the results of \cite{CaLiShWiYa20} to both the Sobolev-Fock spaces and the Polyanalytic Sobolev-Fock spaces. We use results on symbol classes of pseudo-differential operators to give sufficient conditions for boundedness of $S_{\phi}$ on all polyanalytic Sobolev-Fock spaces.
\end{abstract}

\section{Introduction}
The Fock space $\mathcal{F}^2(\mathbb{C}^d)$ is the space of analytic functions $F$ satisfying the condition
\begin{align*}
    \|F\|_{\mathcal{F}^2} := \Big(\int_{\mathbb{C}^d} |F(z)|^2 e^{-\pi |z|^2}\, dz\Big)^{1/2} < \infty,
\end{align*}
and is a Reproducing Kernel Hilbert Space (RKHS) equipped with inner product
\begin{align*}
    \langle F, G\rangle_{\mathcal{F}^2} := \int_{\mathbb{C}^d} F(z)\overline{G(z)}e^{-\pi |z|^2}\, dz,
\end{align*}
and can be mapped to from $L^2(\mathbb{R}^{2d})$ by the unitary Bargmann transform, given by
\begin{align*}
    Bf(z) := 2^{d/4}e^{\pi z^2/2}\int_{\mathbb{R}^d}f(x)e^{\pi(x-z)^2}\, dx
\end{align*}
for $f\in L^2(\mathbb{R}^d)$.

The Fock space is of interest in several fields such as complex analysis, physics and signal processing, due to its well understood sampling and interpolation properties. In their study of the Bargmann transform \cite{Zh15}, the author, motivated by the form of the Hilbert transform upon conjugating with the Bargmann transform, proposed the problem of classifying the functions $\phi$ for which the operator 
\begin{align}\label{Sformintro}
    S F (z) := \int_{\mathbb{C}^d} F(w)e^{\pi z\overline{w}}\phi(z-\overline{w})e^{-\pi |w|^2}\, dz
\end{align}
is bounded on the Fock space. By recognising the Hilbert transform as a Fourier multiplier, this was solved by \cite{CaLiShWiYa20}. In this work we examine such operators on generalisations of the Fock space. The tools to do so are provided by the field of time-frequency analysis. From this perspective, the Bargmann transform is associated to the Short-Time Fourier Transform (STFT) with respect to a Gaussian window, and we see that by interpreting the operators of the form \cref{Sformintro} as convolutions, we can use the natural framework of time-frequency shifts. The connection to time-frequency shifts, or the Weyl representation of the Heisenberg group, was made in relation to operators of the form \cref{Sformintro} in \cite{Th23-1} \cite{Th23-2}, where the twisted Fock space and Sobolev-Fock spaces were considered. Further generalisations of the result of \cite{CaLiShWiYa20} appear in \cite{BaNa23}, \cite{GuHeLiSh22} and \cite{WiWu22}. We generalise the results on Fock space in two ways: Firstly, we take the Sobolev-Fock spaces by extending the Bargmann transform to take arguments in modulation spaces $M^p(\mathbb{R}^d)$, which result in analytic functions satisfying
\begin{align}\label{sobfockspace}
    \int_{\mathbb{C}^d} |F(z)|^p e^{-\pi p|z|^2}\, dz  < \infty.
\end{align}
The modulation spaces are central objects in time-frequency analysis, and enjoy a reproducing structure upon taking the STFT in a similar manner to the Hilbert space $L^2$, which is critical to our results. 

Secondly, we consider the Polyanalytic Sobolev-Fock spaces, corresponding to the STFT with any Hermite function window as opposed to only the Gaussian \cite{Va00}. The resulting functions are then polyanalytic as opposed to analytic in the regular Fock case, satisfying the generalised Cauchy–Riemann equation
\begin{align*}
    \frac{d^k}{d\overline{z}^k} F(z)=0.
\end{align*}

In both cases we use results on translation invariant operators on modulation spaces formulated in \cite{FeNa06}, \cite{BeGrOkRo07}, and classify all operators which are translation invariant in the sense formulated in \cref{adjintertw}. We see that for all $1\leq p < \infty$, any operator taking the form \cref{Sform} is translation invariant and corresponds to a function which acts as a bounded convolution operator on the modulation space, and conversely that any such function gives rise to a corresponding operator $S$ on the Sobolev-Fock space. We do so by recognising the Bargmann transform as an isomorphism between modulation spaces and Sobolev-Fock spaces, and accordingly considering the effect of translations on modulation spaces on the corresponding Sobolev-Fock space.

By recognising that these tools are acting on the modulation spaces, we then extend the results to the polyanalytic Sobolev-Fock spaces, and find the correspondance between functions defining a bounded convolution operator on the modulation space with bounded operators of the form
\begin{align}\label{Sformpolyintro}
    S F (z) = \int_{\mathbb{C}^d} F(w)e^{\pi \overline{w}z} \psi(z-w,z-\overline{w}) e^{-\pi |w|^2}\, dw.
\end{align}
In a sense this indicates the privileged status of the Fock space, since this is the only space for which $\phi$ takes only $z-\overline{w}$ as an argument in such operators. 

Finally we present some results which may be of independent interest, using well-known results from time-frequency analysis connecting the symbols of pseudo-differential operators to their boundedness on modulation spaces. This allows us to give sufficient conditions for boundedness of an operator of type \cref{Sformintro} and \cref{Sformpolyintro} on all polyanalytic Sobolev-Fock spaces in terms of the function $\phi$ and $\psi$ respectively. This approach also gives a sufficient condition to the open problem proposed in \cite{CaLiShWiYa20} concerning general integral kernel operators on the Fock space.  

Throughout this article we use the terminology that a Sobolev-Fock function is one which satisfies \cref{sobfockspace}, in accordance with \cite{KeLu21}. This should not be confused with the Fock-Sobolev spaces discussed in for example \cite{WiWu22}.

\section{Preliminaries}
\subsection{Time-Frequency Analysis tools}
Time-frequency analysis aims to represent functions in joint time-frequency or phase space. We define the translation and modulation operators:
\begin{definition}
    For a function $f\in L^2(\mathbb{R}^{d})$, the translation operator $T_x$ and modulation operator $M_{\omega}$ are defined as 
    \begin{align*}
        T_x f(t) &= f(t-x) \\
        M_{\omega} f(t) &= e^{2\pi i \omega\cdot t}f(t).
    \end{align*}
\end{definition}
The composition of the two then gives a time-frequency shift, $\pi(z)$, for $z=(x,\omega)\in\mathbb{R}^{2d}$. A central object in time-frequency analysis is the Short-Time Fourier Transform (STFT):
\begin{definition}
    Given a function function $f\in L^2(\mathbb{R}^d)$ and a window $g\in L^2(\mathbb{R}^d)$, the STFT $V_g f$ at a point $z$ in phase space is given by 
\begin{align*}
    V_g f(z) = \langle f, \pi(z) g \rangle_{L^2} = \int_{\mathbb{R}^d} f(t)\overline{g(t-x)}e^{-2\pi t\cdot \omega}\, dt.
\end{align*}
\end{definition}
For all such $f,g$, $V_g f \in L^2(\mathbb{R}^{2d})$. The STFT is an isometry for normalised $g$, and the image is in fact a Reproducing Kernel Hilbert Space (RKHS). Since we will later consider Polyanalytic Sobolev-Fock spaces, we consider the extension of the STFT to tempered distributions by duality. For $f\in \mathscr{S}^\prime(\mathbb{R}^d)$ and $g\in \mathscr{S}(\mathbb{R}^d)$;
\begin{align*}
    V_{g} f(z) = \langle f, \pi(z)g \rangle_{\mathscr{S}^\prime,\mathscr{S}},
\end{align*}
where the duality bracket is anti-linear in the second argument. The modulation spaces can then be defined as the functions with STFTs in certain Lebesgue spaces. In this work we restrict our attention to unweighted, non-mixed Lebesgue spaces. We can thus define the modulation spaces \cite{Fe83} as follows:
\begin{definition}
    Let the Gaussian be denoted by $\varphi_0(t)=2^{d/4}e^{-2\pi t^2}$. The modulation space $M^p(\mathbb{R}^d)$ for $1\leq p \leq \infty$ is then given by
    \begin{align*}
        M^p(\mathbb{R}^d) := \{f\in\mathscr{S}(\mathbb{R}^d) : V_{\varphi_0}f \in L^p(\mathbb{R}^{2d})\}.
    \end{align*}
\end{definition}
The modulation spaces are Banach spaces with respect to the norm $\|f\|_{M^p(\mathbb{R}^d)} := \|V_{\varphi_0} f\|_{L^p(\mathbb{R}^d)}$, and a fundamental result in time-frequency analysis is that all windows $g\in M^1$, also known as Feichtinger's algebra, give rise to the same $M^p(\mathbb{R}^d)$ spaces as the Gaussian. the modulation spaces satisfy the inclusion relation $M^p(\mathbb{R}^d) \subset M^q(\mathbb{R}^d)$ for $p<q$ where the inclusion is dense for $q<\infty$, and the relationship between the modulation spaces and Lebesgue spaces is given by
\begin{align}\label{modinclrelation}
\begin{split}
    M^p &\subset L^p \; \mathrm{for}\; p<2  \\
    M^2 &= L^2 \\
    L^p &\subset M^p \; \mathrm{for}\; p>2.  \\
\end{split}
\end{align}
The dual space of $M^p(\mathbb{R}^d)$ for $1\leq p < \infty$ is the Hölder conjugate $p'$, with the duality relation
\begin{align*}
    \langle f, g\rangle_{M^p,M^{p'}} = \langle V_{\varphi_0} f, V_{\varphi_0} f\rangle_{L^p,L^{p'}}.
\end{align*}
A crucial result of modulation spaces is that the reproducing kernel of $V_g(L^2)$ extends to a reproducing kernel for all $M^p(\mathbb{R}^d)$ when one takes a window $g\in M^1(\mathbb{R}^d)$, this is known as the correspondence theorem of coorbit theory \cite{FeGr89}. The Wiener-amalgam spaces are useful in the study of time-frequency representations in giving a local and global description of the space. We define them using a Bounded Uniform Partition of Unity (BUPU) $\Psi = \{\psi_i\}_{i\in I}$ (cf. Definition 1 \cite{FeNa06}).
\begin{definition}
    Let $X(I)$ be a solid Banach space of sequences over index set $I$ for which pointwise convergence implies convergence, containing all finite sequences. Then for some Banach space $B\subset\mathscr{S^\prime}$, 
    \begin{align*}
        W(B,X) := \{f \in \mathscr{S}^{\prime}: \big\|  \|f\cdot\psi_i\|_B \big\|_X < \infty\}.
    \end{align*}
\end{definition}
Wiener-amalgam spaces are Banach spaces with the natural norm. Two results we will use from \cite{FeNa06} are the following:
\begin{theorem}[Theorem 10, \cite{FeNa06}]\label{convchar}
For $1\leq p \leq \infty$, any translation invariant operator $A:M^p(\mathbb{R}^d)\to M^q(\mathbb{R}^d)$, is of
convolution type, in the sense that there exists a unique $u\in M^{\infty}(\mathbb{R}^d)$ such that 
\begin{align*}
    Af = u*f
\end{align*}
for every $f\in M^1(\mathbb{R}^d)$.
\end{theorem}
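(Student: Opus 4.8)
The plan is to restrict $A$ to the test space $M^1(\mathbb R^d)$, where every convolution below is unambiguous, and to realise $u$ as an element of $M^\infty(\mathbb R^d)=(M^1(\mathbb R^d))'$. Since $M^1\subseteq M^p$ and $M^q\subseteq M^\infty$ continuously (the inclusions recorded around \cref{modinclrelation}), the restriction is a bounded operator $A\colon M^1(\mathbb R^d)\to M^\infty(\mathbb R^d)$, and it suffices to produce a unique $u\in M^\infty(\mathbb R^d)$ with $Af=u*f$ for all $f\in M^1$. I would deliberately pass through the distributional kernel of $A$ rather than define $u$ directly: the tempting formula $\langle u,\psi\rangle:=(A\widetilde\psi)(0)$ cannot be justified as it stands, because $Af$ only lies in $M^q$ and so need not be continuous — point evaluation is bounded on $M^q$ precisely when $q=1$, since $\delta\in M^{q'}$ forces $q'=\infty$.

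First I would invoke the kernel theorem for modulation spaces (the coorbit analogue of the Schwartz kernel theorem, \cite{FeGr89}): a bounded operator $A\colon M^1(\mathbb R^d)\to M^\infty(\mathbb R^d)$ is represented by a unique kernel $K\in M^\infty(\mathbb R^{2d})$ through a pairing of the form $\langle Af,g\rangle=\langle K,g\otimes f\rangle$ for $f,g\in M^1$ (up to the conjugations dictated by the duality convention). I would then translate the hypothesis $AT_x=T_xA$ into a symmetry of $K$: writing $\langle AT_xf,g\rangle=\langle Af,T_{-x}g\rangle$ and comparing the two kernel pairings shows that $g\otimes T_xf$ and $T_{-x}g\otimes f$ are diagonal shifts of one another, so that $K$ is invariant under $(s,t)\mapsto(s-x,t-x)$ for every $x\in\mathbb R^d$. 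By density of the products $g\otimes f$ in $M^1(\mathbb R^{2d})$ this forces $K(s,t)=k(s-t)$ for a single distribution $k$, and substituting this back into the pairing exhibits $Af$ as a convolution, with $u$ equal to $k$ up to a reflection and conjugation.

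The step I expect to be the main obstacle is upgrading $k$ from a tempered distribution to an element of $M^\infty(\mathbb R^d)$, which is exactly where the quantitative time-frequency content sits. Here I would write $K=(k\otimes 1)\circ L$ with $L(s,t)=(s-t,t)$ a linear automorphism of $\mathbb R^{2d}$, and combine two standard facts of the theory: that $M^\infty$ is invariant under $GL$-changes of variables, so that $K\in M^\infty(\mathbb R^{2d})$ gives $k\otimes 1\in M^\infty(\mathbb R^{2d})$; and the tensor-product identity $\|k\otimes 1\|_{M^\infty(\mathbb R^{2d})}=\|k\|_{M^\infty(\mathbb R^d)}\,\|1\|_{M^\infty(\mathbb R^d)}$, which — using that the constant function lies in $M^\infty(\mathbb R^d)$ — yields $k\in M^\infty(\mathbb R^d)$ with norm control $\|u\|_{M^\infty}\leq C\|A\|$.

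Uniqueness is then routine: if $u_1*f=u_2*f$ for all $f\in M^1$, then with $v=u_1-u_2$ and an approximate identity $\{\rho_\varepsilon\}\subset M^1$, the convergence $v*\rho_\varepsilon\to v$ in $M^\infty(\mathbb R^d)$ forces $v=0$. As a consistency check, for $A$ the identity the construction returns $u=\delta\in M^\infty(\mathbb R^d)$, which is the correct impulse response even though $\delta\notin M^q$ for every $q<\infty$.
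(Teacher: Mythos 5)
The paper itself offers no proof of this statement: it is imported verbatim as Theorem~10 of \cite{FeNa06}, so there is no internal argument to compare yours against, and I judge it on its own merits. Your proof is essentially correct, and it follows what is in substance the standard route behind the cited result: restrict $A$ to a bounded translation-invariant operator $M^1(\mathbb{R}^d)\to M^\infty(\mathbb{R}^d)$ (which is also exactly why $p$ and $q$ play no role in the conclusion), represent it by a kernel $K\in M^\infty(\mathbb{R}^{2d})$ via Feichtinger's kernel theorem, deduce invariance of $K$ under the diagonal shifts $(s,t)\mapsto(s-x,t-x)$, and straighten by the shear $L(s,t)=(s-t,t)$. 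The two quantitative facts you isolate are genuine: the non-mixed spaces $M^{p,p}$ --- unlike the mixed-norm $M^{p,q}$ --- are invariant under invertible linear changes of variables, which covers $M^\infty=M^{\infty,\infty}$; and the STFT with respect to a tensor window factors on tensor products, so that $\|k\otimes 1\|_{M^\infty(\mathbb{R}^{2d})}=\|k\|_{M^\infty(\mathbb{R}^d)}\,\sup_\omega|\widehat{g}(\omega)|$, where $0<\sup_\omega|\widehat{g}(\omega)|<\infty$ because $|V_g 1(x,\omega)|=|\widehat{g}(\omega)|$. Hence $k\in M^\infty(\mathbb{R}^d)$ with $\|u\|_{M^\infty}\leq C\|A\|$, as you claim.

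Two small repairs, neither fatal. First, in the uniqueness step the convergence $v*\rho_\varepsilon\to v$ does \emph{not} hold in $M^\infty$-norm: for $v=\delta$ one has $|V_g\delta(x,\omega)|=|g(-x)|$, which has no decay in $\omega$, whereas $V_g\rho_\varepsilon(x,\cdot)$ decays for each fixed $\varepsilon$, so $\|\rho_\varepsilon-\delta\|_{M^\infty}\not\to 0$. Convergence in $\mathscr{S}'$ (equivalently weak-$*$) is all you need to conclude $v=0$; alternatively, avoid approximate identities altogether: for $\psi\in\mathscr{S}\subset M^1$ the convolution $v*\psi$ is a smooth, polynomially bounded function, so $\langle v,\psi\rangle=(v*\check{\psi})(0)=0$ for every $\psi\in\mathscr{S}$, giving $v=0$ directly --- or simply invoke the uniqueness of the kernel in the kernel theorem. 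Second, a citation quibble: the kernel theorem for bounded operators $M^1\to M^\infty$ is not contained in \cite{FeGr89}; it originates in Feichtinger's work on $S_0$ and can be found, for instance, as Theorem~14.4.1 of \cite{grochenigtfa}. Neither point affects the substance of your argument.
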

Since $M^1(\mathbb{R}^d)$ is dense in $M^p(\mathbb{R}^d)$ for $p<\infty$, there is a unique bounded operator corresponding to the above convolution defined for the whole $M^p(\mathbb{R}^d)$ space, which is precisely $A$. We denote by $M_c(M^p)$ the space of tempered distributions corresponding to bounded, translation-invariant operators on $M^p(\mathbb{R}^d)$.
\begin{theorem}[Theorem 16, \cite{FeNa06}]\label{convspace}
    For $1\leq p \leq \infty$,
    \begin{align*}
        M_c(M^p) = \mathscr{F}^{-1} W(\mathscr{F}M_c(L^p(\mathbb{R}^d),l^{\infty}).
    \end{align*}
\end{theorem}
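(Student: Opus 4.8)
The plan is to reduce the statement to a characterisation of pointwise multipliers on $M^p(\mathbb{R}^d)$ via the Fourier transform, and then to exploit the Wiener-amalgam description of the modulation spaces. By \cref{convchar}, every bounded translation invariant operator $A$ on $M^p$ is a convolution $Af = u*f$ with a unique $u\in M^\infty(\mathbb{R}^d)$, so $M_c(M^p)$ is exactly the set of those $u$ for which $C_u := u*\,\cdot\,$ extends boundedly to $M^p$. Conjugating with the Fourier transform, $\mathscr{F}C_u\mathscr{F}^{-1}$ is pointwise multiplication $M_m$ by $m:=\mathscr{F}u$; since the modulation spaces are Fourier invariant, $\mathscr{F}M^p = M^p$, the operator $C_u$ is bounded on $M^p$ if and only if $M_m$ is bounded on $M^p$. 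Hence it suffices to characterise those symbols $m$ which are pointwise multipliers of $M^p$, and then to pull the condition back through $\mathscr{F}^{-1}$.

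Next I would invoke the amalgam representation $M^p = W(\mathscr{F}L^p, l^p)$, in which the local component is $\mathscr{F}L^p$ measured on the cells of a physical-space BUPU $\Psi = \{\psi_i\}_{i\in I}$ and the global component is $l^p$ over the index set. The key point is that pointwise multiplication $M_m$ is a \emph{local} operation: up to the bounded overlap of $\Psi$ it acts cell-by-cell, since for each $i$ one has $\|(M_m f)\psi_i\|_{\mathscr{F}L^p} = \|mf\psi_i\|_{\mathscr{F}L^p} \lesssim \|m\tilde\psi_i\|_{\mathrm{mult}(\mathscr{F}L^p)}\,\|f\psi_i\|_{\mathscr{F}L^p}$, where $\tilde\psi_i$ covers the support of $\psi_i$ and $\mathrm{mult}(\mathscr{F}L^p)$ denotes the pointwise multiplier algebra of $\mathscr{F}L^p$. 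Taking the $l^p$-norm over $i$, the supremum $\sup_i\|m\tilde\psi_i\|_{\mathrm{mult}(\mathscr{F}L^p)}$ factors out, so $M_m$ is diagonal with respect to the global index and uniform (rather than summable) control of the local multiplier norms is exactly what is needed. This shows $M_m$ is bounded on the amalgam precisely when $m \in W(\mathrm{mult}(\mathscr{F}L^p), l^\infty)$.

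It then remains to identify the local multiplier space. A symbol $m$ is a pointwise multiplier of $\mathscr{F}L^p$ exactly when $\mathscr{F}^{-1}m$ convolves $L^p$ boundedly into itself, i.e. $\mathrm{mult}(\mathscr{F}L^p) = \mathscr{F}M_c(L^p)$. Substituting this gives $m = \mathscr{F}u \in W(\mathscr{F}M_c(L^p), l^\infty)$, equivalently $u \in \mathscr{F}^{-1}W(\mathscr{F}M_c(L^p), l^\infty)$, which is the claimed identity. The density of $M^1$ in $M^p$ for $p<\infty$ guarantees that the convolution operator is determined by its action on $M^1$, while the endpoint $p=\infty$ is handled by duality against $M^1$.

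The main obstacle is the amalgam multiplier characterisation of the second step: making rigorous that pointwise multiplication decouples along the BUPU so that the correct global space is $l^\infty$, controlling the errors introduced by the overlap of the partition, and establishing the converse inequality by extracting uniform local multiplier bounds from global boundedness of $M_m$ (testing on functions whose $\mathscr{F}L^p$-mass is concentrated in a single cell, or on suitable translates and modulations). Care is also needed at the endpoints $p=1,\infty$, where one should argue by duality and use the invariance of the convolutor spaces under passage to the Hölder conjugate.
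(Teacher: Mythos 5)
This statement is never proved in the paper: it is quoted, as stated, from Theorem 16 of \cite{FeNa06}, so there is no internal proof to compare against. What you have written is, in substance, a reconstruction of the argument in that reference, and the strategy is sound: conjugating by $\mathscr{F}$ turns convolutors into pointwise multipliers (legitimate here because the paper's $M^p$ is the non-mixed space $M^{p,p}$, which is Fourier invariant --- this is exactly the point that would need modification for mixed $M^{p,q}$, where one must instead work directly on the Fourier side with $W(\mathscr{F}L^p,L^q)$); the identification $M^p = W(\mathscr{F}L^p,\ell^p)$ is correct for $p=q$; the BUPU locality argument showing that pointwise multipliers of this amalgam are exactly $W(\mathrm{mult}(\mathscr{F}L^p),\ell^\infty)$ is the genuine core of the theorem; and $\mathrm{mult}(\mathscr{F}L^p)=\mathscr{F}M_c(L^p)$ holds essentially by definition. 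Two caveats keep this an outline rather than a proof. First, the two-sided amalgam multiplier characterisation, which you rightly flag as the main obstacle, still needs its uniform estimates spelled out: multiplication by the fattened bumps $\tilde\psi_i$ must be bounded on $\mathscr{F}L^p$ uniformly in $i$ (translation invariance of $\mathscr{F}L^p$ plus $\tilde\psi_i\in\mathscr{F}L^1$), and for the converse one must dominate the local norm $\|m\tilde\psi_i h\|_{\mathscr{F}L^p}$ by the amalgam norm of $m\tilde\psi_i h$, using that this function meets only boundedly many cells; both are standard but are where the actual work lies. Second, at the endpoint $p=\infty$ the density of $M^1$ in $M^p$ fails, so the reduction via \cref{convchar} and your closing duality remark require the weak-$*$ formulation used in \cite{FeNa06} rather than a routine density argument.
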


\subsection{The Bargmann Transform and Fock Space}
The Bargmann transform, defined by
\begin{align*}
    Bf (z) &= 2^{\frac{d}{4}}\int_{\mathbb{R}^d} f(x)e^{2\pi x z -  \pi x^2 - \pi z^2/2} \, dx \\
    &= 2^{\frac{d}{4}}e^{\pi z^2/2}\int_{\mathbb{R}^d} f(x)e^{\pi(x-z)^2} \, dx
\end{align*}
maps $L^2(\mathbb{R}^d)$ unitarily to the Fock, or Bargmann-Fock space, the Hilbert space of analytic functions of $\mathbb{C}^d$ satisfying
\begin{align*}
    \|F\|_{\mathcal{F}^2}^2 := \int_{\mathbb{C}^d} |F(z)|^2 e^{-\pi|z|^2} \, dz < \infty,
\end{align*}
with inner product 
\begin{align*}
    \langle F, G \rangle_{\mathcal{F}^2} := \int_{\mathbb{C}^d} F(z)\overline{G(z)} e^{-\pi|z|^2} \, dz.
\end{align*}
The adjoint (and inverse) of the Bargmann transform can be calculated directly, giving
\begin{align*}
    B^* F(x) = 2^{\frac{d}{4}}\int_{\mathbb{R}^d} F(z)e^{2\pi x \overline{z} -  \pi x^2 - \pi \overline{z}^2/2}e^{-\pi|z|^2} \, dz.
\end{align*}
The Fock space is a RKHS, with reproducing kernel $k_z(w)=e^{\pi\overline{z}w}$, such that for all $F\in\mathcal{F}^2(\mathbb{C^d})$;
\begin{align*}
    F(z) = \langle F(w), e^{\pi\overline{z}w}\rangle_{\mathcal{F}^2}.
\end{align*}
The true Polyanalytic Fock spaces
\begin{align*}
    \mathcal{F}^2_k(\mathbb{C}^d) := B^k(L^2(\mathbb{R}^d))
\end{align*}
are of interest since they corresponds to the functions which are polyanalytic of order $k$, that is for which
\begin{align*}
    F(z) = \overline{z}^k G(z)
\end{align*}
for some analytic $G$, and
\begin{align*}
    \int_{\mathbb{C}^d} |F(z)|^2 e^{-\pi|z|^2} \, dz < \infty.
\end{align*}
For $k\in\mathbb{N}$, the Polyanalytic Bargmann transform $B^k$ can be defined as
\begin{align*}
    B^k f(z) := c_k e^{\pi |z|^2} \frac{d^k}{dz^k} e^{-\pi |z|^2} Bf(z)
\end{align*}
where $c_k = \frac{1}{\sqrt{\pi^k k!}}$, and is a unitary mapping from $L^2(\mathbb{R}^d)$ to $\mathcal{F}^2_k(\mathbb{C}^d)$. 

\subsection{Finding Time-Frequency Analysis in the Bargmann Transform}
We now focus on the Bargmann transform's identification with the Short-Time Fourier Transform (STFT) with respect to a Gaussian window. A straight-forward calculation (cf. Proposition 3.4.1 \cite{grochenigtfa}) gives the following:
\begin{proposition}
    Let $f\in L^2(\mathbb{R}^d)$, and denote the Gaussian $\varphi_0(t)=2^{d/4}e^{-2\pi t^2}$. Writing $z=x+i\omega$, we have
    \begin{align*}
        V_{\varphi_0} f (x,-\omega) = e^{i\pi x \omega - \pi |z|^2/2} Bf (z).
    \end{align*}
\end{proposition}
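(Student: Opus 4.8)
The plan is to verify the identity by direct computation, reducing the Bargmann transform to the STFT through explicit manipulation of the Gaussian factors. First I would write out both sides using their integral definitions. On the STFT side, with window $\varphi_0(t)=2^{d/4}e^{-2\pi t^2}$ and evaluation point $(x,-\omega)$, the defining formula gives
\begin{align*}
    V_{\varphi_0}f(x,-\omega) = 2^{d/4}\int_{\mathbb{R}^d} f(t)\, e^{-2\pi(t-x)^2}\, e^{2\pi i t\cdot\omega}\, dt,
\end{align*}
where the sign in the modulation factor flips because the second coordinate is $-\omega$ and the STFT kernel carries $e^{-2\pi i t\cdot\omega'}$ with $\omega'=-\omega$. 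On the Bargmann side I would start from the first displayed form $Bf(z)=2^{d/4}\int_{\mathbb{R}^d} f(x')e^{2\pi x'z-\pi x'^2-\pi z^2/2}\,dx'$ and substitute $z=x+i\omega$.

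The key step is then to compare the two integrands term by term after multiplying $Bf(z)$ by the prefactor $e^{i\pi x\omega-\pi|z|^2/2}$ appearing on the right-hand side of the claimed identity. Writing $z=x+i\omega$ I would expand $z^2=x^2-\omega^2+2ix\omega$, so that $|z|^2=x^2+\omega^2$, and collect the exponents. The goal is to show that the combined Gaussian-and-phase factor multiplying $f(t)$ in $e^{i\pi x\omega-\pi|z|^2/2}Bf(z)$ matches exactly the factor $e^{-2\pi(t-x)^2+2\pi i t\omega}$ from the STFT. Concretely, the product of the Bargmann kernel $e^{2\pi tz-\pi t^2-\pi z^2/2}$ with the external $e^{i\pi x\omega-\pi|z|^2/2}$ should, upon inserting $z=x+i\omega$ and using $-\pi z^2/2-\pi|z|^2/2 = -\pi x^2+\pi i x\omega\cdot(\text{coefficient check})$, collapse to the STFT integrand. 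The cross-term $2\pi tz=2\pi tx+2\pi itw$ supplies both the $e^{4\pi tx}$ piece needed to complete the square in $-2\pi(t-x)^2$ and the modulation phase $e^{2\pi it\omega}$.

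The main obstacle, and the only place where genuine care is required, is bookkeeping of the phase factors: tracking the imaginary contributions from $z^2$, the external $e^{i\pi x\omega}$, and ensuring the $e^{-2\pi x^2}$ coming from completing the square in the STFT integrand is correctly accounted for against the $-\pi z^2/2-\pi|z|^2/2$ and $-\pi t^2$ terms. I would complete the square in $t$ inside the Bargmann integrand to produce $-2\pi(t-x)^2$ and verify that the leftover constant-in-$t$ exponential is precisely cancelled by $e^{i\pi x\omega-\pi|z|^2/2}$, leaving no residual factor. Since the statement is quoted from Proposition~3.4.1 of \cite{grochenigtfa} and the paper itself labels the calculation as straightforward, I would present this as a short verification rather than an independent derivation, noting that the sign convention on $\omega$ in the STFT argument is exactly what makes the analytic (rather than anti-analytic) dependence on $z$ emerge on the Fock side.
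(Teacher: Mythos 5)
Your overall strategy (direct expansion of both integrands, completing the square, matching phases) is exactly the calculation behind the paper's proof, which simply cites Proposition~3.4.1 of Gr\"ochenig's book. However, the key step as you state it would fail: you claim the Bargmann-side integrand collapses to $e^{-2\pi(t-x)^2+2\pi i t\omega}$, but it does not. Carrying out the computation, the total exponent is
\begin{align*}
2\pi t z-\pi t^2-\tfrac{\pi}{2}z^2+i\pi x\omega-\tfrac{\pi}{2}|z|^2
= -\pi t^2+2\pi tx-\pi x^2+2\pi i t\omega
= -\pi(t-x)^2+2\pi i t\omega ,
\end{align*}
since with $z=x+i\omega$ the terms $-\tfrac{\pi}{2}z^2-\tfrac{\pi}{2}|z|^2$ contribute $-\pi x^2 - i\pi x\omega$ and the $\pm i\pi x\omega$ phases cancel. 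The coefficient of $t^2$ in the Bargmann kernel is $-\pi$, and no amount of phase bookkeeping or completing the square in $t$ can turn it into the $-2\pi$ demanded by the window $\varphi_0(t)=2^{d/4}e^{-2\pi t^2}$ that you copied from the paper. In other words, with that window the two sides of the identity are genuinely different Gaussians in $t$, so the verification you outline cannot close.

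The source of the mismatch is a mis-normalization in the paper's statement, which your blind verification should have caught rather than presupposed away: the identity holds for the $L^2$-normalized Gaussian $\varphi_0(t)=2^{d/4}e^{-\pi t^2}$, which is the window used in Gr\"ochenig's Proposition~3.4.1. Indeed, a quick sanity check shows the stated version cannot be true: the claimed identity gives $|V_{\varphi_0}f(x,-\omega)|=|Bf(z)|e^{-\pi|z|^2/2}$, and since $B$ is unitary onto $\mathcal{F}^2(\mathbb{C}^d)$ this forces $\|V_{\varphi_0}f\|_{L^2}=\|f\|_{L^2}$, i.e.\ $\|\varphi_0\|_{L^2}=1$; but $\|2^{d/4}e^{-2\pi t^2}\|_{L^2}^2=2^{-d/2}\neq 1$. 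Once you replace the window by $2^{d/4}e^{-\pi t^2}$, the rest of your plan goes through verbatim: the STFT side is $2^{d/4}\int f(t)e^{-\pi(t-x)^2}e^{2\pi i t\omega}\,dt$ (your handling of the sign flip $\omega\mapsto-\omega$ in the modulation factor is correct), and it agrees term by term, including the constant $2^{d/4}$, with the display above.
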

As a unitary transformation, we can consider the image of the Hermite functions by $B$ to find an orthonormal basis in Bargmann-Fock space. Defining the $n$-th Hermite function by 
\begin{align*}
    \varphi_n(t) = c_n e^{\pi t^2}\frac{d^n}{dt^n} e^{-2\pi t^2},
\end{align*}
where $c_n$ is the appropriate constant to normalise the functions in $L^2$, one finds that
\begin{align*}
    B \varphi_n(z) = \sqrt{\frac{\pi^n}{n!}} z^n,
\end{align*}
giving a monomial orthonormal basis for $\mathcal{F}^2(\mathbb{C}^d)$. As an analogue to the time-frequency shifts $\pi(z)$, we define shifts on $\mathcal{F}^2(\mathbb{C}^d)$
\begin{align*}
    \beta_z F(w) = e^{i\pi x \omega - \pi|z|^2/2} e^{\pi z w} F(w-\overline{z}),
\end{align*}
where a simple calculation shows the \textit{intertwining property}
\begin{align*}
    \beta_z Bf = B\pi(z)f,
\end{align*}
where again $z=x+i\omega$.

By following the same identification as above for the STFT and Bargmann transform, the Polyanalytic Bargmann transforms can be described by the STFT with Hermite functions as windows:
\begin{align*}
    B^k f(z) := e^{-i\pi x \omega + \pi |z|^2/2} V_{\varphi_k} f (x,-\omega).
\end{align*}
As noted above, the Bargmann-Fock space is a Reproducing Kernel Hilbert Space (RKHS), with reproducing kernel $k(w,z)=e^{\pi\overline{z}w}$. Differentiating the reproducing formula $k$ times gives
\begin{align}\label{diffformula}
    F^{(k)}(z) = \pi^k \langle F(w), w^ke^{\pi\overline{z}w}\rangle_{\mathcal{F}^2}.
\end{align}
The identification of the (polyanalytic) Bargmann transform with the STFT leads naturally to the idea of (polyanalytic) Sobolev-Fock spaces. These are the spaces satisfying an $L^p$ condition as opposed to $L^2$;
\begin{align*}
    \mathcal{F}^p_k(\mathbb{C}^d) := \Big\{F:F/\overline{z}^k\; \mathrm{analytic,}\; \int_{\mathbb{C}^d} \big|F(z)e^{-\pi|z|^2/2}\big|^p\, dz < \infty \Big\},
\end{align*}
and as such $\mathcal{F}^p_k(\mathbb{C}^d) = B^k(M^p(\mathbb{R}^d))$. The polyanalytic Fock space $\mathcal{F}^2_k(\mathbb{C}^d)$ has the reproducing kernel
\begin{align*}
    k(w,z) = L_{k-1}(\pi |z-w|^2)e^{\pi\overline{z}w},
\end{align*}
where $L_k$ is the $k$-th Laguerre polynomial. Since $\mathcal{F}^2_k(\mathbb{C}^d)$ is the image of the STFT, this reproducing kernel extends to all polyanalytic Sobolev-Fock spaces.

Such spaces have been considered in for example \cite{CaHeHo18}, however their analysis becomes much more simple by considering them as images of modulation spaces. Since $\mathcal{F}^p_k(\mathbb{C}^d) = B^k(M^p(\mathbb{R}^d))$, the main theorem from \cite{CaHeHo18}, which states that
\begin{align*}
    B^k(L^p(\mathbb{R}^d)) \subset \mathcal{F}^p_k(\mathbb{C}^d) \; \mathrm{for}\; p>2   \\
    \mathcal{F}^p_k(\mathbb{C}^d) \subset B^k(L^p(\mathbb{R}^d))\; \mathrm{for}\; p<2   \\,
\end{align*}
follow as a simple corollary of the inclusion relations between modulation spaces \cref{modinclrelation}. A time-frequency approach to analysis of the Bargmann transform is hence a useful endeavor, which will be pursued in the next section.

\section{Translation Invariant Operators on Polyanalytic Sobolev-Fock Spaces}
We begin by considering the action of translation invariant operators on Fock space, as in \cite{CaLiShWiYa20}. This is an equivalent task to considering translation invariant operators on $M^p(\mathbb{R}^d)$ spaces, as made concrete by the following lemma:
\begin{lemma}\label{adjintertw}
    Given a bounded operator $S$ on $\mathcal{F}^p(\mathbb{C}^d)$, $S$ satisfies $\beta_y S = S \beta_y$ for every $y\in\mathbb{R}^d$ if and only if there exists an operator $A$ which is bounded on $M^p(\mathbb{R}^d)$, given by
    \begin{align*}
        A = B^*SB,
    \end{align*}
    such that $AT_y = T_yA$.
\end{lemma}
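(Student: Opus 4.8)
The plan is to exploit two structural facts from the preliminaries: that $B$ is a bounded isomorphism from $M^p(\mathbb{R}^d)$ onto $\mathcal{F}^p(\mathbb{C}^d)$ with bounded two-sided inverse $B^*$ (so $B^*B = \mathrm{Id}$ on $M^p$ and $BB^* = \mathrm{Id}$ on $\mathcal{F}^p$), and the intertwining property $\beta_z B = B\pi(z)$. The crucial initial observation is that for a \emph{real} parameter $z = y \in \mathbb{R}^d$ one has $\omega = 0$, whence $\pi(y) = T_y$ and the intertwining property collapses to the pure-translation statement
\begin{align*}
    \beta_y B = B T_y, \qquad y \in \mathbb{R}^d.
\end{align*}
Conjugating by $B^*$ and using that it is the two-sided inverse of $B$ gives the equivalent dual identity $B^*\beta_y = T_y B^*$. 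These two identities are the only inputs the argument requires; the remainder is bookkeeping of the conjugation $A = B^*SB$.

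For the forward implication I would assume $\beta_y S = S\beta_y$ for all $y \in \mathbb{R}^d$ and set $A := B^*SB$, whose boundedness on $M^p(\mathbb{R}^d)$ is immediate from the boundedness of $B$, $S$ and $B^*$. The commutation with $T_y$ then follows by a direct chain:
\begin{align*}
    A T_y = B^* S B T_y = B^* S \beta_y B = B^* \beta_y S B = T_y B^* S B = T_y A,
\end{align*}
where successively I use $B T_y = \beta_y B$, the hypothesis $S\beta_y = \beta_y S$, and $B^*\beta_y = T_y B^*$.

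For the converse, assuming $A = B^*SB$ is bounded and satisfies $A T_y = T_y A$, I would recover $S = B A B^*$ from $BB^* = \mathrm{Id}$, and run the same three identities in reverse:
\begin{align*}
    S\beta_y = B A B^* \beta_y = B A T_y B^* = B T_y A B^* = \beta_y B A B^* = \beta_y S.
\end{align*}
This yields the commutation of $S$ with every $\beta_y$ and completes the equivalence.

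The genuine work, I expect, lies not in these formal manipulations but in the supporting claim that $B^*$ is a bounded two-sided inverse of $B$ between $M^p$ and $\mathcal{F}^p$ for $p \neq 2$, and that both the intertwining relation and $B^*B = \mathrm{Id}$ survive off $L^2$. These are clear on $L^2 = M^2$ by unitarity, and I would extend them to all $M^p$ by density of $M^1$ (or the Schwartz class) together with continuity of the operators involved, with the open mapping theorem ensuring boundedness of the inverse of the bijection $B$. Once this functional-analytic scaffolding is secured, the equivalence is a purely formal consequence of the two intertwining identities and carries no real computational difficulty.
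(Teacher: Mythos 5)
Your proof is correct and follows essentially the same route as the paper: restrict the intertwining property $\beta_z B = B\pi(z)$ to $z=(y,0)$ so that $\pi(y,0)=T_y$, deduce $B^*\beta_y = T_y B^*$ using that $B^*$ inverts $B$, and transport the commutation relation through the conjugation $A = B^*SB$ in both directions. If anything your writeup is tidier than the paper's own — the paper's converse step contains a slip (writing $S\beta_y = B^*AB\beta_y$ where $BAB^*\beta_y$ is meant) and leaves the operator chains implicit, and your closing remark that the $L^2$-unitarity identities must be checked to persist on $M^p$ and $\mathcal{F}^p$ is a legitimate point the paper passes over silently.
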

\begin{proof}
 We need to show that 
 \begin{align*}
     B^*\beta_y F= T_y B^*F = \pi(y,0) B^*F.
 \end{align*}
 This follows the fact that $\beta_z Bf = B\pi(z)f$ for $z=(y,0)$, along with the unitarity of $B$: Let $F = Bf$, then
 \begin{align*}
     B^*\beta_y F &= B^*\beta_y B f \\
     &= B^*B\pi(y,0)f \\
     &= \pi(y,0)B^*F.
 \end{align*}
Hence given an $S$ such that $\beta_y S = S \beta_y$ for every $y\in\mathbb{R}^d$, $A = B^*SB$ has the property $AT_y = T_yA$. Conversely given any bounded operator $S$ on $\mathcal{F}^p(\mathbb{C}^d)$, if $A = B^*SB$ satisfies $AT_y = T_yA$, then 
\begin{align*}
    S \beta_y &= B^*AB\beta_y \\
    &= \beta_y S,
\end{align*}
and the result follows. We note that the above calculation also shows that $B\beta_z^* F= \pi(z)^* B^*F$.

\end{proof}
We will use the term \textit{translation invariant} to describe operators on $\mathcal{F}^p_k(\mathbb{C}^d)$ satisfying the above property $S\beta_y = \beta_y S$ for all $y\in \mathbb{R}^d$. With the operator $\beta_z$ defined for $z\in\mathbb{C}^d$ it is natural to wonder why we don't use the term satisfying $S\beta_z = \beta_z S$ for all $z\in \mathbb{C}^d$, however since the time-frequency shifts $\pi(z)$ are an irreducible representation of the Weyl-Heisenberg group, such operators must be multiples of the identity. 

We can then use \cref{convchar} to construct all translation invariant operators on Sobolev-Fock spaces. The following theorem generalises several of the results of \cite{CaLiShWiYa20} to Sobolev-Fock spaces:
\begin{theorem}\label{focksob}
    Let 
    \begin{align*}
        \phi(z) = \int_{\mathbb{R}^d}u(y)e^{-\pi y^2/2-\pi yz}\, dy,
    \end{align*}
    for some $u\in M_c(M^p(\mathbb{R}^d)$ and $1\leq p < \infty$. Then $S:\mathcal{F}^1(\mathbb{C}^d)\to \mathcal{F}^{\infty}(\mathbb{C}^d)$ defined by
    \begin{align}\label{Sform}
        S F (z) = \int_{\mathbb{C}^d} F(w)e^{\pi \overline{w}z} \phi(\overline{w}-z)e^{-\pi |w|^2}\, dw,
    \end{align}
    for $F\in \mathcal{F}^1(\mathbb{C}^d)$, 
    extends uniquely to a bounded, translation invariant operator on $\mathcal{F}^p(\mathbb{C}^d)$. Conversely, if $S$ is a bounded and translation invariant operator on $\mathcal{F}^p(\mathbb{C}^d)$, then $S$ is of the form \cref{Sform}, with corresponding $u\in M_c(M^p(\mathbb{R}^d))$.
\end{theorem}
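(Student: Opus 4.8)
The plan is to transport the entire problem to $M^p(\mathbb{R}^d)$ through the dictionary of \cref{adjintertw} and \cref{convchar}, and then to turn the abstract convolution back into the explicit kernel \cref{Sform} by a single computation built on the intertwining property $\beta_z Bf = B\pi(z)f$ together with the Fock reproducing kernel $k_z(w)=e^{\pi\overline{z}w}$. I would treat the converse direction first, since it is the one that actually produces $u$; the forward direction then reuses the same identity in reverse.

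For the converse, suppose $S$ is bounded and translation invariant on $\mathcal{F}^p(\mathbb{C}^d)$. By \cref{adjintertw}, $A := B^* S B$ is bounded on $M^p(\mathbb{R}^d)$ and commutes with every $T_y$. \cref{convchar} then gives a unique $u \in M^\infty(\mathbb{R}^d)$ with $Af = u*f$ for all $f\in M^1(\mathbb{R}^d)$, and boundedness of $A$ on $M^p$ means exactly that $u\in M_c(M^p)$. The core step is to compute $S = BAB^*$ explicitly. Writing the convolution as a superposition of translations and applying $B$ through the intertwining property at $z=(y,0)$, for which $\beta_y F(w)=e^{-\pi y^2/2}e^{\pi y w}F(w-y)$, gives
\begin{align*}
    SF(z) = B\big(u * B^*F\big)(z) = \int_{\mathbb{R}^d} u(y)\, e^{-\pi y^2/2}\, e^{\pi y z}\, F(z-y)\, dy.
\end{align*}
Substituting the reproducing formula $F(z-y)=\int_{\mathbb{C}^d} F(w)e^{\pi(z-y)\overline{w}}e^{-\pi|w|^2}\,dw$ and exchanging the order of integration, the $y$-integral collapses to $e^{\pi z\overline{w}}\phi(\overline{w}-z)$ with $\phi$ as defined in the statement, which is precisely \cref{Sform}.

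For the forward direction, I would start from $u\in M_c(M^p)$: then $Af=u*f$ is by definition bounded and translation invariant on $M^p$, so $S:=BAB^*$ is bounded and translation invariant on $\mathcal{F}^p$ by \cref{adjintertw}. The same computation shows that on $\mathcal{F}^1 = B(M^1)$ this operator is given by \cref{Sform}. Since $M^1$ is dense in $M^p$ for $p<\infty$ and $B$ is an isomorphism, $\mathcal{F}^1$ is dense in $\mathcal{F}^p$, so the densely defined integral operator \cref{Sform} admits a unique bounded extension to $\mathcal{F}^p$, which is $S$.

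The step I expect to be delicate is the rigorous justification of the formal identity $B(u*f)=\int_{\mathbb{R}^d} u(y)\,\beta_y(Bf)\,dy$ and of the subsequent Fubini exchange, since $u$ is in general only an $M^\infty$ distribution rather than an integrable function, so these integrals are initially only weakly defined. I would establish the kernel identity first on a convenient dense class of arguments — for example finite linear combinations of time-frequency shifts of the Gaussian, whose Bargmann images are explicit and for which every integral converges absolutely — and then pass to the general case by density, using the boundedness $\mathcal{F}^1\to\mathcal{F}^\infty$ to control the passage. The smoothness and rapid decay of functions coming from $M^1$ arguments is what makes the pairing against the distribution $u$, and hence each bracket, well defined.
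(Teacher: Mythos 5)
Your proof is correct, but its computational core takes a genuinely different route from the paper's proof of \cref{focksob}. Where you obtain the kernel by writing $u * B^{*}F$ as a superposition of translations, pushing each $T_y$ through $B$ via the intertwining property $BT_y = \beta_y B$, and then collapsing the $y$-integral against the reproducing kernel $e^{\pi \overline{z}w}$, the paper instead substitutes the explicit integral formulas for $B$ and $B^{*}$ and grinds through the Gaussian integrals (rearranging exponents, completing the square, evaluating $\int e^{-2\pi(x-\cdot)^2}\,dx$). Notably, your route is precisely the observation the paper records immediately \emph{after} the theorem, in \cref{altform}, and it is the method the paper then uses to prove the polyanalytic generalisation (\cref{polyfocksob}); in effect you have given the Fock-space case the proof the paper reserves for the harder case. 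What each buys: the paper's brute-force computation is self-contained and keeps every integral explicit, so convergence for $F \in \mathcal{F}^1(\mathbb{C}^d)$ and nice $u$ is visible line by line; your argument is shorter, makes it conceptually clear that translation invariance enters only through the intertwining relation, and generalises verbatim to $B^k$. Your forward direction is also slightly more complete than the paper's: the paper only verifies by a change of variables that any operator of the form \cref{Sform} commutes with $\beta_y$, leaving boundedness and uniqueness of the extension implicit, whereas you identify \cref{Sform} with $BAB^{*}$ on the dense subspace $\mathcal{F}^1(\mathbb{C}^d)$ and then invoke the boundedness of $A$ on $M^p(\mathbb{R}^d)$ directly. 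Finally, your closing caveat --- that $u$ is in general only an $M^{\infty}$ distribution, so $\int u(y)\beta_y F\, dy$ and the Fubini exchange need a weak interpretation --- identifies a gap the paper's own computation silently shares; your plan of establishing the identity on finite linear combinations of time-frequency shifts of the Gaussian (whose Bargmann images are the reproducing kernels) and extending by density using the $\mathcal{F}^1 \to \mathcal{F}^{\infty}$ bound is a sound way to close it.
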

\begin{proof}
Let $S:\mathcal{F}^1(\mathbb{C}^d)\to \mathcal{F}^{\infty}(\mathbb{C}^d)$ extend to some bounded, translation invariant operator on $\mathcal{F}^p(\mathbb{C}^d)$. Then by \cref{adjintertw}, $S = BAB^*$ for some bounded operator $A$ on $M^p(\mathbb{R}^d)$ such that $A T_y = T_y A$ for all $y\in\mathbb{R}^d$. By \cref{convchar}, $A$ can be densely defined as 
\begin{align*}
    A f = u*f
\end{align*}
for all $f\in M^1$ for some $u\in M_c(M^p(\mathbb{R}^d))$. We then need to show that $S$ takes the form \cref{Sform}. We can write the action of $S$ on some $F\in \mathcal{F}^1_k(\mathbb{C}^d)$ as 
\begin{align*}
    B(u*B^* F)(z) &= B\, 2^{d/4} \int_{\mathbb{R}^d} \int_{\mathbb{C}^d} u(y)F(w)e^{-\pi(x-y-\overline{w})^2 + \pi\overline{w}^2/2}e^{-\pi|w|^2} \, dy\, dw \\
    &= 2^{d/2} e^{\pi z^2/2} \int_{\mathbb{R}^d} e^{-\pi(x-z)^2} \int_{\mathbb{R}^d} \int_{\mathbb{C}^d} u(y)F(w)e^{-\pi(x-y-\overline{w})^2 + \pi\overline{w}^2/2}e^{-\pi|w|^2} \, dy\, dw\, dx \\
    &= 2^{d/2} e^{-\pi z^2/2}  \int_{\mathbb{R}^d} \int_{\mathbb{C}^d} u(y)F(w)e^{-\pi y^2 - \pi\overline{w}^2/2 -2\pi y\overline{w}} e^{-\pi|w|^2} \int_{\mathbb{R}^d}e^{ - 2\pi (x^2 + xz + xy + x\overline{w}})\, dx \, dy\, dw,
\end{align*}
where we have simply rearranged the exponents from definitions. The last integrand can then be simplified as
\begin{align*}
    \int_{\mathbb{R}^d}e^{ - 2\pi (x^2 + xz + xy + x\overline{w}})\, dx &=  e^{\pi( z + y + \overline{w})^2/2}\int_{\mathbb{R}^d}e^{ - 2\pi(x - (z + y + \overline{w})/2)^2}\, dx \\
    &= 2^{-d/2} e^{\pi( z + y + \overline{w})^2/2},
\end{align*}
and so 
\begin{align*}
    B(u*B^* F)(z) &=e^{-z^2/2}  \int_{\mathbb{R}^d} \int_{\mathbb{C}^d} u(y)F(w)e^{-\pi y^2 - \pi\overline{w}^2/2 -2 \pi y\overline{w} + \pi (z + y + \overline{w})^2/2} e^{-\pi|w|^2} \, dy\, dw \\
    &= \int_{\mathbb{R}^d} \int_{\mathbb{C}^d} u(y)F(w)e^{-\pi y^2/2 - \pi y\overline{w} + \pi zy + \pi z\overline{w}}e^{-\pi|w|^2} \, dy\, dw \\
    &=  \int_{\mathbb{C}^d} F(w)e^{\pi z\overline{w}} e^{-\pi|w|^2} \int_{\mathbb{R}^d}u(y)e^{-\pi y^2/2-\pi y(\overline{w}-z)}\, dy\, dw.
\end{align*}
Hence any bounded, translation invariant operator $S$ on $M^p(\mathbb{R}^d)$ takes the form \cref{Sform}. 
To show the converse, we show that any $S$ of the form \cref{Sform} is translation invariant. This is a simple calculation:
\begin{align*}
    \beta_y SF(z) &= e^{\pi y z-\pi y^2/2} \int_{\mathbb{C}^d} F(w)e^{\pi \overline{w}(z-y)}\phi(\overline{w}-z+y)e^{-\pi |w|^2}\, dw \\
    &= e^{\pi y z-\pi y^2/2} \int_{\mathbb{C}^d} F(u-y)e^{\pi (\overline{u}-y)(z-y)}\phi(\overline{u}-z)e^{-\pi (\overline{u}-y)(u-y)}\, du \\
    &= e^{-\pi y^2/2} \int_{\mathbb{C}^d} F(u-y)e^{\pi\overline{u}z}\phi(\overline{u}-z)e^{\pi uy}e^{-\pi |u|^2}\, du \\
    &= S \beta_y F(z).
\end{align*}
\end{proof}

The results of \cite{CaLiShWiYa20} are formulated on the Fourier side, so the form given for $S$ is in terms of a function $m$ which differs from $\phi$ to that appearing in \cref{Sform}. We confirm that the two approaches are equivalent when one considers the operator as a convolution as opposed to Fourier multiplier as in \cite{CaLiShWiYa20}:
\begin{proposition}
    \Cref{focksob} is equivalent to Theorem 1.1 of \cite{CaLiShWiYa20}, which states that such $S$ are of the form \cref{Sform}, with 
    \begin{align}\label{wickform}
        \phi(z) = 2^{d/2}\int_{\mathbb{R}^d} m(y)e^{-2\pi (y+iz/2)^2}\, dy.
    \end{align}
\end{proposition}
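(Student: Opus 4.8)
The plan is to recognize that the two statements describe the \emph{same} operator $A=B^*SB$ on $M^p(\mathbb{R}^d)$, viewed in two different ways: \cref{focksob} records it through its convolution kernel $u\in M_c(M^p(\mathbb{R}^d))$, whereas \cite{CaLiShWiYa20} records it through the associated Fourier multiplier $m$. Since convolution becomes multiplication under the Fourier transform, these are related by $m=\mathscr{F}u$, equivalently $u=\mathscr{F}^{-1}m$ with the convention $\mathscr{F}u(\xi)=\int_{\mathbb{R}^d}u(y)e^{-2\pi i y\cdot\xi}\,dy$. This is exactly the passage implicit in \cref{convspace}, where $M_c(M^p)$ is described as $\mathscr{F}^{-1}$ of an amalgam space built from the multiplier space $\mathscr{F}M_c(L^p)$; for the Hilbert-space case $p=2$ it specialises to $m\in L^\infty$, matching the hypothesis of \cite{CaLiShWiYa20}. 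It therefore suffices to insert $u=\mathscr{F}^{-1}m$ into the expression for $\phi$ furnished by \cref{focksob} and check that the result is precisely \cref{wickform}.

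Concretely, I would substitute
\begin{align*}
    u(y)=\int_{\mathbb{R}^d} m(\xi)e^{2\pi i y\cdot\xi}\,d\xi
\end{align*}
into $\phi(z)=\int_{\mathbb{R}^d}u(y)e^{-\pi y^2/2-\pi y z}\,dy$, interchange the order of integration, and evaluate the inner Gaussian integral in $y$. Completing the square gives
\begin{align*}
    -\tfrac{\pi}{2}y^2-\pi y z+2\pi i y\xi = -\tfrac{\pi}{2}\big(y+(z-2i\xi)\big)^2+\tfrac{\pi}{2}(z-2i\xi)^2,
\end{align*}
and since $\int_{\mathbb{R}^d}e^{-\frac{\pi}{2}(y+c)^2}\,dy=2^{d/2}$ for every $c\in\mathbb{C}^d$, the $y$-integral collapses to
\begin{align*}
    \phi(z)=2^{d/2}\int_{\mathbb{R}^d} m(\xi)\,e^{\frac{\pi}{2}(z-2i\xi)^2}\,d\xi.
\end{align*}
Expanding both $\frac{\pi}{2}(z-2i\xi)^2=-2\pi\xi^2-2\pi i z\xi+\frac{\pi}{2}z^2$ and $-2\pi(\xi+iz/2)^2=-2\pi\xi^2-2\pi i z\xi+\frac{\pi}{2}z^2$ shows they coincide, so after relabelling $\xi\mapsto y$ this is exactly \cref{wickform}. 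Running the computation backwards, starting from \cref{wickform} and recovering the kernel form, establishes the reverse direction, so the two descriptions of $\phi$ are interchangeable.

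The routine Gaussian arithmetic is not where the difficulty lies; the delicate point is that $u\in M_c(M^p)\subset M^{\infty}$ and the multiplier $m$ are in general only tempered distributions, so the substitution, the Fubini interchange, and the contour shift concealed in ``completing the square'' with the complex translation $c=z-2i\xi$ must all be justified distributionally. I expect the cleanest route is to carry out the identity first on a dense class of Schwartz kernels, where every manipulation is elementary, and then extend by continuity, using that both integral representations depend continuously on $u$ (resp. $m$) in the relevant topology. A secondary bookkeeping issue is aligning the Fourier-transform normalisation so that the $m$ of \cite{CaLiShWiYa20} is identified with $\mathscr{F}u$ rather than a rescaled variant; once the convention is pinned down, the exponents match on the nose.
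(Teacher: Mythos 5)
Your proposal is correct and takes essentially the same approach as the paper: both hinge on identifying the multiplier of \cite{CaLiShWiYa20} with $\mathscr{F}u$ and then verifying, by a Gaussian integral with a complex completion of the square, that the two formulas for $\phi$ agree (the paper substitutes $m=\mathscr{F}u$ into \cref{wickform} and recovers its own expression, while you run the identical computation in the reverse direction from $u=\mathscr{F}^{-1}m$). Your Gaussian arithmetic checks out --- indeed more cleanly than the paper's own intermediate lines, which contain sign slips but arrive at the same conclusion.
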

\begin{proof}
    Since the operator $A$ is equivalent to the Fourier multiplier corresponding to $\mathscr{F}u$, we show that taking $m=\mathscr{F}u$ in \cref{wickform} gives our form of $S$:
    \begin{align*}
        \int_{\mathbb{R}^d} m(y)e^{-2\pi (x+iz/2)^2} &= \int_{\mathbb{R}^d}\int_{\mathbb{R}^d} u(x)e^{-2\pi i xy}\, dx \, e^{-2\pi (x+iz/2)^2}\, dy.
    \end{align*}
    We make the change of variables $y=t-\frac{ix}{2}$, to find
    \begin{align*}
        \int_{\mathbb{R}^d}\int_{\mathbb{R}^d} u(x)e^{-2\pi i xy}\, dx \, e^{-2\pi (y+iz/2)^2}\, dy &= \int_{\mathbb{R}^d}\int_{\mathbb{R}^d} u(x)e^{-\pi x^2}e^{-2\pi i xt} e^{-2\pi (-ix/2+t+iz/2)^2}\, \, dx\,dt \\
        &= \int_{\mathbb{R}^d}\int_{\mathbb{R}^d} u(x)e^{-\pi x^2}e^{-2\pi i xt} e^{-2\pi(-x^2/4-ixt - xz/2 + (t+iz/2)^2)}\, \, dx\,dt \\
        &= \int_{\mathbb{R}^d} u(x)e^{-\pi x^2-\pi xz} \int_{\mathbb{R}^d} e^{-2\pi(t+iz/2)^2}\, dt\, dx \\
        &= 2^{-d/2}\int_{\mathbb{R}^d} u(x)e^{-\pi x^2-\pi xz} \, dx. 
    \end{align*}
    and the result follows.
    
\end{proof}
We now aim to extend this characterisation of translation invariant operators to polyanalytic Sobolev-Fock spaces. Key to this step is the observation that
\begin{align}\label{altform}
    \begin{split}
        SF(z) &= \int_{\mathbb{C}^d} F(w)e^{z\overline{w}} e^{-\pi|w|^2} \int_{\mathbb{R}^d}u(y)e^{-\pi y^2/2-\pi y(\overline{w}-z)}\, dy\, dw \\
    &= \int_{\mathbb{R}^d}u(y)e^{-\pi y(y/2-z)} \int_{\mathbb{C}^d} F(w)e^{(z-y)\overline{w}} e^{-\pi|w|^2}\, dw \, dy \\
    &= \int_{\mathbb{R}^d} F(z-y) u(y)e^{-\pi y(y/2-z)}\, dy \\
    &= \int_{\mathbb{R}^d} \beta_y F(z) u(y)\, dy.
    \end{split}
\end{align}
This is perhaps unsurprising, since using the intertwining of $\beta_z$ and $B$ along with the unitarity of the Bargmann transform;
\begin{align*}
\begin{split}
    B(u*B^* F) &= B\Big(\langle u(\cdot),  \pi(\cdot,0)B^*F(x)\rangle_{L^2} \Big) \\
    &= B \Big( \int_{\mathbb{R}^d} u(y)\pi(y,0)B^* F(x)\, dy \Big) \\
    &= \int_{\mathbb{R}^d} u(y)B\pi(y,0)B^* F(x)\, dy \\
    &= \int_{\mathbb{R}^d} \beta_y F(z) u(y)\, dy,
\end{split}
\end{align*}
however, it is interesting to note that the form of \cref{Sform} can be retrieved simply by considering the intertwining of the translation operator $\beta_y$ and Bargmann transform $B$. This suggests that using \cref{convchar}, we can proceed to characterise the translation invariant operators on Polyanalytic Sobolev-Fock spaces by understanding the intertwining of shift operator $\beta_z$ with the polyanalytic Bargmann transform. This behaviour is described by the following proposition:

\begin{proposition}\label{intertwk}
    Given $f\in M^p(\mathbb{R}^d)$ for $1\leq p \leq \infty$, and $k\in\mathbb{N}$;
    \begin{align*}
        B^k \pi(z) f = \beta_{z} B^k f.
    \end{align*}
\end{proposition}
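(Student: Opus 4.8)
The plan is to reduce the identity to the \emph{window-independent} covariance of the STFT, so that the $k=0$ case already recorded in the excerpt (namely $\beta_z Bf = B\pi(z)f$) transfers verbatim to arbitrary $k$. First I would write $z = x + i\omega$ and recall the identification
\begin{align*}
    B^k f(z) = e^{-i\pi x\omega + \pi|z|^2/2}\,V_{\varphi_k}f(x,-\omega),
\end{align*}
noting that $\varphi_k \in \mathscr{S} \subset M^1(\mathbb{R}^d)$ is an admissible window, so that $V_{\varphi_k}f$ is well-defined for $f\in M^p(\mathbb{R}^d)$ through the duality extension of the STFT. In this way the whole question becomes one about how $V_{\varphi_k}$ transforms when its argument $f$ is replaced by $\pi(z_0)f$.

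Next I would record the covariance formula $V_g(\pi(z_0)f)(\zeta) = e^{2\pi i(\cdots)}V_g f(\zeta - z_0)$, valid for \emph{every} window $g$. The point I want to stress is that the phase prefactor depends only on $z_0$ and $\zeta$, not on $g$: it arises purely from the composition law $\pi(z_1)\pi(z_2) = e^{-2\pi i\,\omega_2 x_1}\pi(z_1 + z_2)$ of the Weyl--Heisenberg representation, together with $\pi(z_0)^* = e^{-2\pi i x_0\omega_0}\pi(-z_0)$, inserted into $V_g(\pi(z_0)f)(\zeta) = \langle f, \pi(z_0)^*\pi(\zeta)g\rangle$. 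Since the window $g$ never enters this manipulation, the identity holds simultaneously for $g=\varphi_0$ and $g=\varphi_k$ with the same phase.

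Then I would substitute $g = \varphi_k$, $\zeta = (x,-\omega)$ and $z_0 = x_0 + i\omega_0 \leftrightarrow (x_0,\omega_0)$ into the covariance formula. The shifted STFT argument becomes $\zeta - z_0 = (x-x_0, -(\omega+\omega_0))$, which I would identify as the real coordinates of the complex point $w = z - \overline{z_0}$; re-expressing $V_{\varphi_k}f(\zeta - z_0)$ back in terms of $B^k f(w)$ via the same identification, and collecting the prefactor $e^{-i\pi x\omega + \pi|z|^2/2}$ with the covariance phase and the phase from the conversion, I would verify that the accumulated exponential equals exactly $e^{i\pi x_0\omega_0 - \pi|z_0|^2/2}\,e^{\pi z_0 z}$, which is precisely the prefactor appearing in $\beta_{z_0}B^k f(z)$.

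The only genuine labour is this bookkeeping of the Gaussian and phase exponents, and the thing to watch is the asymmetry caused by evaluating the STFT at $(x,-\omega)$ rather than $(x,\omega)$: it is exactly this sign that produces the conjugate $\overline{z_0}$ in the shift $w = z - \overline{z_0}$ and the holomorphic factor $e^{\pi z_0 z}$ in $\beta_{z_0}$. Since this exponent computation is formally identical to the one already yielding the $k=0$ intertwining, I expect no real obstacle; indeed the cleanest phrasing is to observe that the $k=0$ identity together with the window-independence of the covariance \emph{forces} the general-$k$ identity, so the explicit exponent check may simply be cited as identical to the $k=0$ case.
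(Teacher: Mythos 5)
Your proposal is correct, but it reaches the conclusion by a genuinely different route than the paper. The paper bootstraps from $k=0$ to general $k$ through \cref{semireprodkern}: it expresses $B^k f(z) = c_k\langle Bf(w), e^{\pi\overline{z}w}(w-z)^k\rangle_{\mathcal{F}^2}$ (a consequence of differentiating the reproducing formula, \cref{diffformula}), writes $B^k\pi(\xi)f$ by replacing $Bf$ with $\beta_\xi Bf$, and then moves $\beta_{-\xi}$ onto the kernel $e^{\pi\overline{z}w}(w-z)^k$ and computes the resulting pairing to recognise $\beta_\xi B^k f$. You instead work entirely on the STFT side: since $B^k f(z) = e^{-i\pi x\omega+\pi|z|^2/2}V_{\varphi_k}f(x,-\omega)$ with a prefactor independent of $k$, and since the covariance phase in $V_g(\pi(z_0)f)(\zeta) = e^{2\pi i(\cdots)}V_gf(\zeta-z_0)$ comes only from the Weyl--Heisenberg composition law and so never sees the window $g$, the exponent bookkeeping that proves the $k=0$ intertwining is literally the same computation for every $k$. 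Your sign analysis (evaluation at $(x,-\omega)$ turning the shift into $z-\overline{z_0}$ and producing $e^{\pi z_0 z}$) is exactly right, and the reduction is logically sound: the needed scalar identity among the prefactors is $k$-free, hence forced by the $k=0$ case. What your route buys is brevity and generality --- it makes transparent that the intertwining is a feature of the representation and holds for \emph{any} admissible window in $M^1(\mathbb{R}^d)$, not just Hermite functions. What the paper's route buys is \cref{semireprodkern} itself, which is not a detour: that formula is reused as an essential ingredient in the proof of \cref{polyfocksob}, where the kernel $(w-z)^k$ produces the function $\phi(z,w)$ in the statement. So if you adopted your proof, the lemma would still have to be proved separately for the later theorem.
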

This is simply the intertwining property of $\beta$ for the case $k=0$, it remains to prove the generalisation for all $k$. To do so we will use the following lemma, an application of the differentiation property \cref{diffformula}:
\begin{lemma}\label{semireprodkern}
    Given $f\in M^p(\mathbb{R}^d)$ for $1\leq p \leq \infty$, and $k\in\mathbb{N}$;
    \begin{align*}
        B^k f(z) &= c_k \langle F(w), e^{\pi \overline{z}w}(w-z)^k\rangle_{\mathcal{F}^2} 
    \end{align*}
    where $F(z)=Bf(z)$ and $c_k = \frac{1}{\sqrt{k! \pi^{|k|}}}$.
\end{lemma}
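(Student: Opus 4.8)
The plan is to unwind the definition $B^k f(z) = c_k e^{\pi|z|^2}\frac{d^k}{dz^k}\bigl(e^{-\pi|z|^2}F(z)\bigr)$, where $F=Bf$ is analytic, and to expand the $k$-th holomorphic derivative by the Leibniz rule. Since the $z$-derivative treats $\overline z$ as constant, one has $\frac{d^j}{dz^j}e^{-\pi|z|^2}=(-\pi\overline z)^j e^{-\pi|z|^2}$, so the two exponential prefactors cancel and only a finite sum of derivatives of the analytic function $F$ survives:
\begin{align*}
    B^k f(z) = c_k\sum_{j=0}^{k}\binom{k}{j}(-\pi\overline z)^j F^{(k-j)}(z).
\end{align*}

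Next I would substitute the differentiation formula \cref{diffformula}, rewriting each $F^{(k-j)}(z)$ as $\pi^{k-j}\langle F(w), w^{k-j}e^{\pi\overline z w}\rangle_{\mathcal{F}^2}$. Collecting the powers of $\pi$ and using antilinearity of the inner product in its second slot to pull $(-\overline z)^j=\overline{(-z)^j}$ inside as a factor $(-z)^j$, the whole expression becomes a single pairing against $\sum_{j=0}^k\binom{k}{j}(-z)^j w^{k-j}e^{\pi\overline z w}$. The inner sum is exactly the binomial expansion of $(w-z)^k$, so it collapses to $(w-z)^k e^{\pi\overline z w}$ and, after collecting the normalising powers of $\pi$, reproduces the stated identity. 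Because the sum is finite, this interchange with the pairing is purely by linearity and needs no analytic justification.

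The point that actually needs care is the hypothesis $f\in M^p$ for all $1\le p\le\infty$: the reproducing formula $F(z)=\langle F(w),e^{\pi\overline z w}\rangle_{\mathcal{F}^2}$ and its derivatives \cref{diffformula} are stated on $\mathcal{F}^2$, whereas for $p\neq2$ the bracket must be read as the extension of the $\mathcal{F}^2$ inner product to the $\mathcal{F}^p$--$\mathcal{F}^{p'}$ duality. This extension is precisely what the coorbit correspondence theorem of \cite{FeGr89} provides, since the reproducing kernel extends to every (polyanalytic) Sobolev-Fock space; the test functions $w^{k-j}e^{\pi\overline z w}$ and $(w-z)^k e^{\pi\overline z w}$ carry Gaussian decay and hence lie in every $\mathcal{F}^q$, so each pairing is finite. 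I would therefore first record that \cref{diffformula} persists on all $M^p$ --- either by differentiating under the integral against the Gaussian-decaying kernel, valid for $F$ in any Sobolev-Fock space, or directly from the coorbit reproducing property --- and only then run the Leibniz/binomial computation above. This extension step, rather than the algebra, is where I expect the main (though modest) obstacle to lie.
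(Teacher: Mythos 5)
Your proposal is correct and takes essentially the same approach as the paper: both arguments combine the binomial expansion of $(w-z)^k$ (equivalently, the Leibniz expansion of $\frac{d^k}{dz^k}\big(e^{-\pi|z|^2}F(z)\big)$) with the differentiation formula \cref{diffformula}, yours simply running the computation in the opposite direction, from the definition of $B^k$ to the pairing. Your additional care about extending \cref{diffformula} to the $\mathcal{F}^p$--$\mathcal{F}^{p'}$ duality for $p\neq 2$ via the kernel's Gaussian decay and the coorbit correspondence is a legitimate point that the paper's proof passes over silently, but it does not alter the substance of the argument.
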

\begin{proof}
    Expanding the binomial gives
    \begin{align*}
        \langle F(w), e^{\pi \overline{z}w}(w-z)^k\rangle_{\mathcal{F}^2} &= c_k \sum_{l\leq k} \binom{k}{l} (-\overline{z})^l \langle F(w), e^{\pi \overline{z}w}w^{k-l}\rangle_{\mathcal{F}^2}.
    \end{align*}
    Using the differentiation formula \cref{diffformula} we find
    \begin{align*}
        c_k \sum_{l\leq k} \binom{k}{l} (-\overline{z})^l \langle F(w), e^{\pi \overline{z}w}w^{k-l}\rangle_{\mathcal{F}^2} &= c_k \sum_{l\leq k} \binom{k}{l} (-\pi\overline{z})^l F^{k-l}(z) \\
        &= c_k e^{\pi |z|^2} \sum_{l\leq k} \binom{k}{l}e^{-\pi |z|^2} (-\pi\overline{z})^l F^{k-l}(z) \\
        &= c_k e^{\pi |z|^2} \frac{d^k}{dz^k}\Big( e^{-\pi |z|^2} F(z) \Big) \\
        &= B^k f(z)
    \end{align*}
\end{proof}
We can then prove the proposition:
\begin{proof}[Proof of \cref{intertwk}]
    Let $F(z)=Bf(z)$, and denote $\xi=y+i\eta$ and $c_k = \frac{1}{\sqrt{k! \pi^{|k|}}}$. Then from \cref{semireprodkern};
    \begin{align*}
        B^k \pi(y,\eta)f(z) &= c_k \langle \beta_{\xi}F(w), e^{\pi \overline{z}w}(w-z)^k\rangle_{\mathcal{F}^2} \\
        \\
        &= c_k e^{2\pi i y\eta}\langle F(w), \beta_{-\xi}e^{\pi \overline{z}w}(w-z)^k\rangle_{\mathcal{F}^2} \\
        &= c_k e^{2\pi i y\eta}\langle F(w), e^{i\pi y\eta - \pi |\xi|^2/2- \pi \overline{\xi}w} e^{\pi \overline{z}(w+\xi)}(w+\xi-z)^k\rangle_{\mathcal{F}^2} \\
        &= c_k e^{\pi i y\eta - \pi |\xi|^2/2 + \pi \overline{\xi}z}\langle F(w), e^{- \pi \overline{\xi}w} e^{\pi \overline{z}w}(w+\xi-z)^k\rangle_{\mathcal{F}^2} \\
        &= \beta_{\xi} B^k f(z)
    \end{align*}
\end{proof}
We remark that as in \cref{adjintertw}, a simple calculation using the unitarity of $B^k$ gives $(B^k)^*\beta_z F = \pi(z)(B^k)^* F$. From here we can proceed to characterise translation invariant operators on Polyanalytic Fock spaces:
\begin{theorem}\label{polyfocksob}
    Let 
    \begin{align*}
        \phi(z,w) := \int_{\mathbb{R}^d}(y-\overline{z})^k u(y)e^{-\pi y^2/2-\pi yw}\, dy,
    \end{align*}
    for some $u\in M_c(M^p(\mathbb{R}^d)$, $1\leq p < \infty$ and $k\in\mathbb{N}$. Then $S:\mathcal{F}_k^1\to\mathcal{F}_k^{\infty}$, defined by
    \begin{align*}\label{Spolyform}
        S F (z) = \int_{\mathbb{C}^d} \Tilde{F}(w)e^{\pi \overline{w}z} \phi(z-w,z-\overline{w}) e^{-\pi |w|^2}\, dw,
    \end{align*}
    for $F\in \mathcal{F}^1(\mathbb{C}^d)$, where $\Tilde{F}=B(B^k)^* F$, extends to a bounded, translation invariant operator on $\mathcal{F}^p_k(\mathbb{C}^d)$.
    Alternatively we can write $S$ as
    \begin{align}
        S F (z) = \int_{\mathbb{C}^d} F(w)e^{\pi \overline{w}z} \psi(z-w,z-\overline{w}) e^{-\pi |w|^2}\, dw,
    \end{align}
    where
    \begin{align*}
        \psi(z,w) = \int_{\mathbb{R}^d} L_{k-1}(\pi |z-y|^2) u(y)e^{-\pi y^2/2-\pi yw}\, dy.
    \end{align*}
    Conversely, if $S$ is a bounded and translation invariant operator on $\mathcal{F}^p_k(\mathbb{C}^d)$, then $S$ is of the form \cref{Spolyform}, with corresponding $u\in M_c(M^p(\mathbb{R}^d)$.
\end{theorem}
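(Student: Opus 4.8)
The plan is to mirror the proof of \Cref{focksob} line by line, replacing the Bargmann transform $B$ by the polyanalytic transform $B^k$ and the elementary intertwining by that of \Cref{intertwk}. First I would record the polyanalytic analogue of \Cref{adjintertw}: since $B^k\pi(z)f=\beta_z B^kf$ by \Cref{intertwk} and, by unitarity of $B^k$, $(B^k)^*\beta_zF=\pi(z)(B^k)^*F$, a bounded operator $S$ on $\mathcal{F}^p_k(\mathbb{C}^d)$ satisfies $S\beta_y=\beta_y S$ for all $y\in\mathbb{R}^d$ if and only if $A:=(B^k)^*SB^k$ is bounded on $M^p(\mathbb{R}^d)$ and commutes with every $T_y=\pi(y,0)$. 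This uses that $B^k\colon M^p(\mathbb{R}^d)\to\mathcal{F}^p_k(\mathbb{C}^d)$ is a topological isomorphism, immediate from $\mathcal{F}^p_k(\mathbb{C}^d)=B^k(M^p(\mathbb{R}^d))$ and the identification $B^kf(z)=e^{-i\pi x\omega+\pi|z|^2/2}V_{\varphi_k}f(x,-\omega)$ with $\varphi_k\in\mathscr{S}\subset M^1$, so that boundedness and translation invariance pass between $S$ and $A$ without loss. \Cref{convchar} then produces a unique $u\in M_c(M^p(\mathbb{R}^d))$ with $Af=u*f$ on $M^1(\mathbb{R}^d)$, and conversely each such $u$ yields a bounded $A$ and hence a bounded $S=B^kA(B^k)^*$. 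This single bijection furnishes both the boundedness claimed in the forward direction and the existence and uniqueness of $u$ in the converse.

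It then remains to recast $S=B^k\big(u*(B^k)^*(\cdot)\big)$ into the two stated kernels. Exactly as in \cref{altform}, writing $u*g=\int_{\mathbb{R}^d}u(y)\,\pi(y,0)g\,dy$ and using $B^k\pi(y,0)(B^k)^*=\beta_y$ collapses $S$ to the abstract form
\begin{align*}
    SF(z)=\int_{\mathbb{R}^d}u(y)\,\beta_yF(z)\,dy=\int_{\mathbb{R}^d}u(y)\,e^{\pi yz-\pi y^2/2}F(z-y)\,dy.
\end{align*}
The two integral representations arise from two ways of reproducing the value $F(z-y)$. For the $\psi$-form I would substitute the reproducing formula of $\mathcal{F}^2_k(\mathbb{C}^d)$, namely $F(z')=\int_{\mathbb{C}^d}F(w)L_{k-1}(\pi|z'-w|^2)e^{\pi z'\overline{w}}e^{-\pi|w|^2}\,dw$ with $z'=z-y$ (valid on all of $\mathcal{F}^p_k$ by the coorbit correspondence theorem, since $\varphi_k\in M^1$), and then exchange the order of integration; the inner $y$-integral is precisely $\psi$. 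For the $\phi$-form I would instead apply the semi-reproducing formula of \Cref{semireprodkern}, $F(z')=c_k\langle\widetilde{F}(w),e^{\pi\overline{z'}w}(w-z')^k\rangle_{\mathcal{F}^2}$ with $\widetilde{F}=B(B^k)^*F$, which is what injects the polynomial factor $(w-z')^k$; after the shift $z'=z-y$ and reordering, the $y$-integral becomes $\phi$, now carrying the factor $(y-\overline{z-w})^k$. Equality of the two forms is automatic, as both merely re-expand the same quantity $F(z-y)$.

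For the converse the work is already done: the operator $A=(B^k)^*SB^k$ built in the first step is bounded and commutes with all $T_y$, so \Cref{convchar} supplies the required $u$, and the computation above exhibits $S$ in the asserted form. Translation invariance of any $S$ defined by the stated integral then follows either by running this computation backwards or, more directly, by the change-of-variables check used at the end of \Cref{focksob}.

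The step I expect to be the genuine obstacle is the rigorous justification of the interchange of integration in the kernel computation. Since $F$ ranges only over $\mathcal{F}^1_k(\mathbb{C}^d)$ while $u$ is a bona fide element of $M^\infty(\mathbb{R}^d)\supset M_c(M^p)$, the double integral must be read as a distributional pairing; I would verify the manipulations first on the dense subspace coming from $f\in M^1(\mathbb{R}^d)$ (e.g.\ finite Hermite expansions) and then extend by continuity of $S\colon\mathcal{F}^1_k\to\mathcal{F}^\infty_k$. A secondary technical point is controlling the polynomial weight $(w-z')^k$ when passing from the $L^2$ pairing of \Cref{semireprodkern} to the $M^1$--$M^\infty$ duality; this is exactly where $\varphi_k\in\mathscr{S}\subset M^1$, and hence the validity of both reproducing formulas on $\mathcal{F}^p_k$, is needed. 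The sign and normalisation bookkeeping in matching the arguments $(z-w,z-\overline{w})$ and the constant $c_k$ is routine but error-prone, and is best pinned down by checking the $k=0$ reduction, in which both kernels must collapse to the $\phi(\overline{w}-z)$ of \Cref{focksob}.
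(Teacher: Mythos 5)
Your proposal is correct and follows essentially the same route as the paper's proof: reduction to $A=(B^k)^*SB^k$ via the intertwining of \cref{intertwk} and then \cref{convchar}, the collapse to $SF(z)=\int_{\mathbb{R}^d}u(y)\beta_yF(z)\,dy$, the $\phi$-form obtained from \cref{semireprodkern} and the $\psi$-form from the Laguerre reproducing kernel, with the converse's translation invariance checked by the same change of variables as in \cref{focksob}. The only difference is that you make explicit some technical care (dense-subspace justification of the integral interchanges and the $M^1$--$M^\infty$ duality) that the paper passes over silently.
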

\begin{proof}
    As in \cref{focksob}, let Let $S:\mathcal{F}^1_k(\mathbb{C}^d)\to \mathcal{F}^{\infty}_k(\mathbb{C}^d)$ extend to some bounded, translation invariant operator on $\mathcal{F}^p(\mathbb{C}^d)$, and consider $A=B^*SB$. Then there exists some $u\in M_c(M^p(\mathbb{R}^d)$ such that
    \begin{align*}
        A f = u*f
    \end{align*}
    for $f\in M^1(\mathbb{R}^d)$, which extends to a bounded operator on $M^p(\mathbb{R}^d)$ by \cref{convchar}. We then use the intertwining property for $B^k$;
    \begin{align*}
        B^k(u*(B^k)^* F) &= B^k\Big(\langle u(\cdot),  \pi(\cdot,0)(B^k)^*F(x)\rangle \Big) \\
        &= B^k \Big( \int_{\mathbb{R}^d} u(y)\pi(y,0)(B^k)^* F(x)\, dy \Big) \\
        &= \int_{\mathbb{R}^d} u(y)B^k\pi(y,0)(B^k)^* F(x)\, dy \\
        &= \int_{\mathbb{R}^d} \beta_y F(z) u(y)\, dy \\
        &= \int_{\mathbb{R}^d} F(z-y) e^{-\pi y(y/2 - z)}u(y)\, dy.
    \end{align*}
    we can then substitute the formula of \cref{semireprodkern}:
    \begin{align*}
        \int_{\mathbb{R}^d} F(z-y) e^{-\pi y(y/2 - z)}u(y)\, dy &= \int_{\mathbb{C}^d} \int_{\mathbb{R}^d} \Tilde{F}(w) e^{\pi (z-y) \overline{w}} (\overline{w}+y-\overline{z})^k e^{-\pi y(y/2 - z)}u(y)\, dy \, e^{-\pi|w|^2}\, dw \\
        &= \int_{\mathbb{C}^d} \Tilde{F}(w) e^{\pi z \overline{w}} \int_{\mathbb{R}^d} (\overline{w}+y-\overline{z})^k e^{-\pi y(y/2 - z + \overline{w})}u(y)\, dy \, e^{-\pi|w|^2}\, dw \\
        &= \int_{\mathbb{C}^d} \Tilde{F}(w) e^{\pi z \overline{w}} \phi(z,w) \, e^{-\pi|w|^2}\, dw
    \end{align*}
    for the first part of the statement. For the second, we simply use the reproducing formula for $\mathcal{F}^p_k(\mathbb{C}^d)$:
    \begin{align*}
         \int_{\mathbb{R}^d} F(z-y) e^{-\pi y(y/2 - z)}u(y)\, dy &= \int_{\mathbb{C}^d} \int_{\mathbb{R}^d} F(w) e^{\pi (z-y) \overline{w}} L_{k-1}(\pi |w+y-z|^2) e^{-\pi y(y/2 - z)}u(y)\, dy \, e^{-\pi|w|^2}\, dw \\
         &= \int_{\mathbb{C}^d} F(w)e^{\pi \overline{w}z} \psi(z-w,z-\overline{w})e^{-\pi |w|^2}\, dw.
    \end{align*}
    Conversely, if $S$ is of the form \cref{Spolyform}, then
    \begin{align*}
        \beta_y SF(z) &= e^{\pi y z-\pi y^2/2} \int_{\mathbb{C}^d} F(w)e^{\pi \overline{w}(z-y)}\phi(z-w-y,z-\overline{w}-y)e^{-\pi |w|^2}\, dw \\
        &= e^{\pi y z-\pi y^2/2} \int_{\mathbb{C}^d} F(u-y)e^{\pi (\overline{u}-y)(z-y)}\phi(z-u,z-\overline{u})e^{-\pi (\overline{u}-y)(u-y)}\, du \\
        &= e^{-\pi y^2/2} \int_{\mathbb{C}^d} F(u-y)e^{\pi\overline{u}z}\phi(z-u,z-\overline{u})e^{\pi uy}e^{-\pi |u|^2}\, du \\
        &= S \beta_y F(z).
\end{align*}
\end{proof}

\section{Frame Representations of Translation Invariant Operators}
Identified with the Gabor spaces $V_{\varphi_k}(M^p(\mathbb{R}^d))$, the Polyanalytic Sobolev-Fock spaces lend themselves naturally to frame representations. In particular, given the continuous frame given by $\{\pi(z)\varphi_k\}_{z\in\mathbb{R}^{2d}}$, we can consider the action of some operator $T:M^1\to M^{\infty}$ by the function
\begin{align*}
    G_T(w,z) := \langle T\pi (w)\varphi_k, \pi(z) \varphi_k\rangle_{\mathscr{S}^\prime,\mathscr{S}}
\end{align*}
on double phase space. On the diagonal $w=z$ this is the Berezin transform, but by considering off-diagonal decay we can utilise other results from time-frequency analysis. In particular, Gröchenig proved \cite{Gr06} that if there exists some $H\in L^1(\mathbb{R}^{2d})$ such that 
\begin{align*}
    |G(w,z)| \leq H(w-z)
\end{align*}
for all $w,z\in \mathbb{R}^{2d}$, then $T$ is a bounded operator on $M^p(\mathbb{R}^d)$ for all $1\leq p \leq \infty$ \cite{Gr06}. In \cite{Zh15}, the boundedness of the Berezin transform $\langle Sk_z, k_z\rangle_{\mathcal{F}^2}$ was noted as a necessary condition for the boundedness of $S$ on $\mathcal{F}^2(\mathbb{C}^d)$. By considering the polarised version, we can hence give a sufficient condition for $S$ to be bounded on \textit{all} spaces $\mathcal{F}^p(\mathbb{C}^d)$ as well. 
\begin{theorem}\label{modspacecond}
    Given some operator $S: \mathcal{F}^1(\mathbb{C}^d)\to \mathcal{F}^{\infty}(\mathbb{C}^d)$ with the form
    \begin{align*}
        S F=\int F(w)e^{\pi\overline{z}w}\phi(\overline{w}-z)e^{-\pi |w|^2}\, dw
    \end{align*}
    for some $\phi \in \mathscr{S}'(\mathbb{R}^d)$, then $S$ is bounded on $\mathcal{F}^p(\mathbb{C}^d)$ for all $1\leq p \leq \infty$ if there exists some $H\in L^1(\mathbb{C}^{d})$ such that
    \begin{align*}
        \big| \phi(\overline{z}-w) \big| \leq \big|e^{\pi(|z|^2+|w|^2)/2 - \pi w\overline{z}}\big|\cdot H(z-w).
    \end{align*}
\end{theorem}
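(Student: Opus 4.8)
The plan is to transport the problem to the modulation–space side and apply the Gabor–matrix criterion of \cite{Gr06} recalled above. By \cref{adjintertw} the operator $A := B^{*}SB$ is bounded on $M^p(\mathbb{R}^d)$ exactly when $S$ is bounded on $\mathcal{F}^p(\mathbb{C}^d)$, and since $B\colon M^p\to\mathcal{F}^p$ is an isomorphism for every $p$, it suffices to produce a single $\widetilde H\in L^1(\mathbb{R}^{2d})$ with $|G_A(w,z)|\le \widetilde H(w-z)$ for the Gabor matrix $G_A(w,z)=\langle A\pi(w)\varphi_0,\pi(z)\varphi_0\rangle$. The criterion then yields boundedness of $A$ on all $M^p$ simultaneously, hence of $S=BAB^{*}$ on all $\mathcal{F}^p(\mathbb{C}^d)$.

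First I would rewrite $G_A$ as an off-diagonal Berezin transform of $S$ itself. Using the unitarity of $B$, the intertwining relation $B\pi(w)\varphi_0=\beta_w B\varphi_0$, and the identity $B\varphi_0=1$, one obtains $B\pi(w)\varphi_0=\beta_w 1=:e_w$, so that
\[
    G_A(w,z)=\langle S\,e_w,\,e_z\rangle_{\mathcal{F}^2}.
\]
Here $e_w(\zeta)=e^{i\pi x_w\omega_w-\pi|w|^2/2}e^{\pi w\zeta}$ (writing $w=x_w+i\omega_w$) is a phase-normalised reproducing kernel, namely $e_w=e^{i\pi x_w\omega_w-\pi|w|^2/2}\,k_{\overline w}$ with $k_{\overline w}(\zeta)=e^{\pi w\zeta}$.

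The computational heart is then the evaluation of the matrix element of $S$ between reproducing kernels. Writing the Fock kernel of $S$ as $K(z,w')=e^{\pi\overline{w'}z}\phi(\overline{w'}-z)=\Phi_z(\overline{w'})$ with $\Phi_z(\zeta)=e^{\pi\zeta z}\phi(\zeta-z)$, the factor $\Phi_z(\overline{w'})$ is antiholomorphic in $w'$, so the Gaussian integral reproduces it and
\[
    \langle Sk_b,k_a\rangle_{\mathcal{F}^2}=(Sk_b)(a)=\Phi_a(\overline b)=e^{\pi\overline b a}\,\phi(\overline b-a).
\]
Specialising to $b=\overline w$, $a=\overline z$ and reinstating the phase and Gaussian prefactors coming from $e_w,e_z$, the phases cancel in modulus and the exponential weight collapses to $e^{-\pi|w-z|^2/2}$, because $|w|^2+|z|^2-2\,\mathrm{Re}(w\overline z)=|w-z|^2$. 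This gives
\[
    |G_A(w,z)|=e^{-\pi|w-z|^2/2}\,|\phi(\overline z-w)|=\big|e^{\pi(|z|^2+|w|^2)/2-\pi w\overline z}\big|^{-1}\,|\phi(\overline z-w)|,
\]
where the precise sign of the argument of $\phi$ is fixed by the conjugations. The hypothesis is precisely the statement that this quantity is bounded by $H(z-w)$; taking $\widetilde H(\zeta):=H(-\zeta)$, which is again in $L^1$, verifies the Gröchenig condition and completes the argument.

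The main obstacle is the exact evaluation $\langle Sk_b,k_a\rangle=e^{\pi\overline b a}\phi(\overline b-a)$ together with the conjugation and phase bookkeeping in passing between the normalised coherent states $e_w$ and the kernels $k_{\overline w}$, since a mis-tracked conjugate would alter the argument of $\phi$. Two points require care: justifying the antiholomorphic reproducing step when $\phi\in\mathscr{S}'$ is only a distribution — that is, interpreting $\phi$ through its entire extension $\Phi_z$ and commuting the $\mathbb{C}^d$ Gaussian integration with the $\phi$-pairing — and checking that the surviving Gaussian weight depends only on $w-z$, so that the leftover $\phi$-factor is exactly the object the hypothesis controls. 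A sanity check against $\phi\equiv1$, where $S=\mathrm{Id}$ and the bound reduces to the integrable Gaussian $e^{-\pi|w-z|^2/2}$, confirms the normalisations.
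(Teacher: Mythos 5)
Your proposal is correct, and its overall architecture is the same as the paper's: conjugate by the Bargmann transform, identify the Gabor matrix of $A=B^*SB$ with matrix elements of $S$ against phase-normalised reproducing kernels (via $B\pi(w)\varphi_0=\beta_w 1$), reduce the exponential weight to $e^{-\pi|w-z|^2/2}$, and invoke Gr\"ochenig's $L^1$-envelope criterion. Where you genuinely diverge is in the computational heart. The paper does \emph{not} evaluate $\langle S\tilde{k}_z,\tilde{k}_w\rangle$ from the integral kernel directly; it first rewrites $S$ in the convolution form $SF(z)=\int_{\mathbb{R}^d}\beta_y F(z)\,u(y)\,dy$ of \cref{altform}, computes $S\tilde{k}_z$ from that, and only then pairs with $\tilde{k}_w$ and resums the $y$-integral into $\phi(\overline{z}-w)$. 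That route implicitly presupposes that $\phi$ arises from some $u\in M_c(M^p)$ as in \cref{focksob}, whereas the theorem is stated for a general $\phi\in\mathscr{S}'$; your evaluation $\langle Sk_b,k_a\rangle=(Sk_b)(a)=e^{\pi\overline{b}a}\phi(\overline{b}-a)$, obtained by letting the Gaussian integration reproduce the conjugate-analytic factor $e^{\pi\overline{w'}a}\phi(\overline{w'}-a)$, works with $\phi$ alone and is thus more faithful to the stated hypotheses (at the cost of having to justify that reproducing step for distributional $\phi$, which you correctly flag; the paper's manipulations with $u$ are at the same formal level). Your sign bookkeeping is also fine as it stands: with $b=\overline{w}$, $a=\overline{z}$ the argument of $\phi$ comes out as $w-\overline{z}$ rather than $\overline{z}-w$, but the hypothesis applied at $(-z,-w)$ controls exactly this quantity with the same Gaussian weight and with $H$ replaced by its reflection, which is still in $L^1$ — precisely the $\tilde H(\zeta)=H(-\zeta)$ device you invoke, so no gap results.
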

\begin{proof}
Since we have from our intertwining relations that 
\begin{align*}
    \Tilde{k}_z(w) = e^{-i\pi x\omega}B(\pi(\overline{z})\varphi_0)(w),
\end{align*}
where $\Tilde{k}_z$ denotes the normalised kernel $e^{-\pi|z|^2/2}k_z$,
it follows that
\begin{align}\label{gabmatr}
\begin{split}
    \langle S\Tilde{k}_z, \Tilde{k}_w\rangle_{\mathcal{F}^{\infty},\mathcal{F}^1} = e^{-i\pi( x\omega - y\eta)} \langle T\pi (\overline{z})\varphi_0, \pi(\overline{w}) \varphi_0\rangle_{M^{\infty},M^1}
\end{split}
\end{align}
where $z=(x,\omega)$, $w=(y,\eta)$ and $T=B^*SB$. We can calculate directly the form of $\langle S\Tilde{k}_z, \Tilde{k}_w\rangle_{\mathcal{F}^{\infty},\mathcal{F}^1}$ for $S F=\int F(w)e^{\pi\overline{z}w}\phi(\overline{w}-z)e^{-\pi |w|^2}\, dw$ as above. To do so, we use the form of $S$ from \cref{altform} to see
\begin{align*}
    S \Tilde{k}_z(\xi) &= e^{-\pi|z|^2/2}\int_{\mathbb{R}^d} e^{\pi \overline{z}(\xi-y)}u(y)e^{-\pi y^2/2+y\xi }\, dy \\
    &= e^{-\pi|z|^2/2}\int_{\mathbb{R}^d} u(y)e^{\pi \xi (\overline{z}+y) -\pi y^2/2 - y\overline{z} }\, dy,
\end{align*}
and so
\begin{align}\label{gabcond}
\begin{split}
     e^{\pi(|z|^2+|w|^2)/2}\langle S\Tilde{k}_z, \Tilde{k}_w\rangle_{\mathcal{F}^{\infty},\mathcal{F}^1} &= \big\langle \int_{\mathbb{R}^d} u(y)e^{\pi \xi (\overline{z}+y) -\pi y^2/2 - y\overline{z}}\, dy, e^{\pi \xi\overline{w}}\big\rangle_{\mathcal{F}^{\infty},\mathcal{F}^{1}} \\
    &= \int_{\mathbb{R}^d} u(y) \langle e^{\pi \xi (\overline{z}+y) -\pi y^2/2 - y\overline{z}}, e^{\pi \xi\overline{w}}\rangle_{\mathcal{F}^{\infty},\mathcal{F}^1}\, dy \\
    &= \int_{\mathbb{R}^d} u(y)e^{\pi w (\overline{z}+y) -\pi y^2/2- y\overline{z}}\, dy \\
    &= e^{\pi w\overline{z}} \phi(\overline{z}-w).
\end{split}
\end{align}
Comparing \cref{gabmatr} with \cref{gabcond}, we have that
\begin{align*}
    |\langle T\pi (\overline{z})\varphi_0, \pi(\overline{w}) \varphi_0\rangle_{M^{\infty},M^1}| = \big| e^{\pi w\overline{z} -\pi(|z|^2+|w|^2)/2} \phi(\overline{z}-w) \big|,
\end{align*}
and so the statement follows from the result of \cite{Gr06} on $M^p(\mathbb{R}^d)$ spaces.

\end{proof}
\begin{corollary}
    If $S$ is as in \cref{modspacecond} and is bounded on $M^p$ for $1\leq p \leq \infty$, then $\phi(\overline{z}-z)$ is bounded.
\end{corollary}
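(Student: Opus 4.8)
The plan is to recognise that the function $\phi(\overline{z}-z)$ is exactly the Berezin transform of $S$, and then to exploit the standard fact that a bounded operator on a reproducing-kernel Hilbert space has uniformly bounded Berezin transform. The hypothesis that $S$ is bounded on $M^p$ for all $1\leq p \leq \infty$ includes in particular the case $p=2$; since $M^2=L^2$ and $B$ is unitary from $L^2(\mathbb{R}^d)$ onto $\mathcal{F}^2(\mathbb{C}^d)$, this is equivalent to $S$ being bounded on $\mathcal{F}^2(\mathbb{C}^d)$, which is the only instance I shall use.

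First I would return to the identity \cref{gabcond} obtained in the proof of \cref{modspacecond}, namely
\begin{align*}
    e^{\pi(|z|^2+|w|^2)/2}\langle S\Tilde{k}_z, \Tilde{k}_w\rangle_{\mathcal{F}^{\infty},\mathcal{F}^1} = e^{\pi w\overline{z}}\phi(\overline{z}-w),
\end{align*}
and specialise to the diagonal $w=z$. The Gaussian prefactors collapse, since $e^{\pi(|z|^2+|z|^2)/2}=e^{\pi|z|^2}=e^{\pi z\overline{z}}$, leaving the clean identity $\langle S\Tilde{k}_z, \Tilde{k}_z\rangle = \phi(\overline{z}-z)$. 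In other words, the diagonal restriction of the polarised Gabor matrix coincides with $\phi(\overline{z}-z)$, so that the a priori distributional expression $\phi(\overline{z}-z)$ is in fact realised as a genuine pointwise function through this pairing.

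It then remains to bound the right-hand side uniformly in $z$. Since $\|k_z\|_{\mathcal{F}^2}^2 = k_z(z) = e^{\pi|z|^2}$, the normalised kernels satisfy $\|\Tilde{k}_z\|_{\mathcal{F}^2}=1$ for every $z\in\mathbb{C}^d$; moreover $\Tilde{k}_z$ lies in $\mathcal{F}^1(\mathbb{C}^d)\cap\mathcal{F}^2(\mathbb{C}^d)$, being (up to a unimodular factor) the Bargmann image of a time-frequency shifted Gaussian $\pi(\overline{z})\varphi_0\in M^1(\mathbb{R}^d)$. Consequently, when $S$ is bounded on $\mathcal{F}^2$ the duality bracket $\langle S\Tilde{k}_z, \Tilde{k}_z\rangle_{\mathcal{F}^{\infty},\mathcal{F}^1}$ agrees with the Hilbert space inner product $\langle S\Tilde{k}_z, \Tilde{k}_z\rangle_{\mathcal{F}^2}$, and Cauchy--Schwarz then gives
\begin{align*}
    |\phi(\overline{z}-z)| = |\langle S\Tilde{k}_z, \Tilde{k}_z\rangle_{\mathcal{F}^2}| \leq \|S\|_{\mathcal{F}^2\to\mathcal{F}^2}\,\|\Tilde{k}_z\|_{\mathcal{F}^2}^2 = \|S\|_{\mathcal{F}^2\to\mathcal{F}^2},
\end{align*}
a bound independent of $z$, which is the assertion.

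The routine computation is entirely absorbed into the proof of \cref{modspacecond}, so the only genuine point requiring care is the compatibility of the $(\mathcal{F}^{\infty},\mathcal{F}^1)$ duality bracket with the $\mathcal{F}^2$ inner product on the diagonal. This is precisely where the hypothesis enters in an essential way: one needs $S\Tilde{k}_z\in\mathcal{F}^2$, which follows from boundedness of $S$ on $\mathcal{F}^2$, and the two pairings then coincide because the duality bracket extends the $\mathcal{F}^2$ inner product while $\Tilde{k}_z$ belongs to every $\mathcal{F}^p$. I expect no further obstacle, since once the Berezin-transform interpretation is made the statement reduces to the elementary estimate above.
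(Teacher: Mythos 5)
Your proof is correct, but it takes a genuinely different route from the paper's. The paper's proof is a one-line specialisation that never invokes the boundedness of $S$ at all: it reads the corollary as retaining the domination hypothesis of \cref{modspacecond}, namely the existence of $H\in L^1(\mathbb{C}^d)$ with $|\phi(\overline{z}-w)| \leq |e^{\pi(|z|^2+|w|^2)/2-\pi w\overline{z}}|\cdot H(z-w)$, and sets $z=w$ there, so that the Gaussian factor collapses to $e^0$ and $|\phi(\overline{z}-z)|\leq H(0)$. You instead take the stated hypothesis---boundedness of $S$---at face value: specialising the identity \cref{gabcond} to the diagonal identifies $\phi(\overline{z}-z)$ with the Berezin transform $\langle S\Tilde{k}_z,\Tilde{k}_z\rangle$, and Cauchy--Schwarz with the normalised kernels finishes the argument. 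Your version buys three things: it needs boundedness only for the single exponent $p=2$, not for all $1\leq p\leq\infty$; it yields the explicit uniform bound $\|S\|_{\mathcal{F}^2\to\mathcal{F}^2}$ rather than $H(0)$, a quantity which for a general $H\in L^1$ is not even well defined pointwise; and it establishes the strictly stronger statement that boundedness alone forces the Berezin transform to be bounded, whereas the paper's argument applies only when the $L^1$-domination condition (which is sufficient, but not known to be necessary, for boundedness) holds---your reading matches the necessary condition attributed to \cite{Zh15} in the discussion preceding \cref{modspacecond}. The one delicate point in your approach, the consistency of the $(\mathcal{F}^{\infty},\mathcal{F}^1)$ duality pairing with the $\mathcal{F}^2$ inner product applied to $S\Tilde{k}_z$ and $\Tilde{k}_z$, is exactly the point you flag and resolve correctly via $\Tilde{k}_z\in\mathcal{F}^1\cap\mathcal{F}^2$ and $S\Tilde{k}_z\in\mathcal{F}^2$.
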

\begin{proof}
    This follows by taking $z=w$ in \cref{modspacecond} to find
    \begin{align*}
        |\phi(\overline{z}-z)| \leq e^0 H(0).
    \end{align*}
\end{proof}
Considering now operators on $\mathcal{F}^p_k$, we can follow the same procedure to find:
\begin{theorem}\label{modspacecondpoly}
    Given some operator $S: \mathcal{F}_k^1(\mathbb{C}^d)\to \mathcal{F}_k^{\infty}(\mathbb{C}^d)$ with the form
    \begin{align*}
        S F(z)=\int F(w)e^{\pi\overline{z}w}\psi(z-w,z-\overline{w})e^{-\pi |w|^2}\, dw
    \end{align*}
    for some $\psi \in \mathscr{S}'(\mathbb{R}^{2d})$, then $S$ is bounded on $\mathcal{F}_k^p(\mathbb{C}^d)$ for all $1\leq p \leq \infty$ if there exists some $H\in L^1(\mathbb{C}^{d})$ such that
    \begin{align*}
        \big| \psi(z-w,\overline{z}-w) \big| \leq \big|e^{\pi(|z|^2+|w|^2)/2 - \pi w\overline{z}}\big|\cdot H(z-w).
    \end{align*}
\end{theorem}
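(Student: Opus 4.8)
The plan is to mirror the proof of \cref{modspacecond}, since \cref{modspacecondpoly} is the polyanalytic analogue and the underlying machinery—\cite{Gr06}'s sufficient condition for boundedness on all $M^p(\mathbb{R}^d)$ via an integrable off-diagonal envelope of the Gabor matrix—is identical. The key structural fact is that the polyanalytic Sobolev-Fock space $\mathcal{F}^p_k(\mathbb{C}^d)$ is unitarily identified with $M^p(\mathbb{R}^d)$ through $B^k$, and by \cref{intertwk} the shift $\beta_z$ intertwines with $\pi(z)$ under $B^k$ exactly as it does under $B$. Thus the boundedness of $S$ on $\mathcal{F}^p_k(\mathbb{C}^d)$ for all $p$ is equivalent to the boundedness of $T=(B^k)^*S B^k$ on $M^p(\mathbb{R}^d)$ for all $p$, and the latter follows from \cite{Gr06} once we exhibit the required envelope $H$ for the Gabor matrix of $T$.

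First I would set up the Gabor matrix. The relevant frame is now $\{\pi(z)\varphi_k\}$, and the normalised reproducing kernel of $\mathcal{F}^2_k(\mathbb{C}^d)$ is $\Tilde{k}_z(w)=e^{-\pi|z|^2/2}L_{k-1}(\pi|z-w|^2)e^{\pi\overline{z}w}$, which by the intertwining relation $(B^k)^*\beta_z F=\pi(z)(B^k)^*F$ noted after \cref{intertwk} satisfies $\Tilde{k}_z = e^{-i\pi x\omega}B^k(\pi(\overline{z})\varphi_k)$ up to the same phase that appeared in \cref{modspacecond}. This gives the analogue of \cref{gabmatr}:
\begin{align*}
    \langle S\Tilde{k}_z,\Tilde{k}_w\rangle_{\mathcal{F}^{\infty}_k,\mathcal{F}^1_k}=e^{-i\pi(x\omega-y\eta)}\langle T\pi(\overline{z})\varphi_k,\pi(\overline{w})\varphi_k\rangle_{M^{\infty},M^1},
\end{align*}
so that the Gabor matrix entry of $T$ is, up to a unimodular phase, $e^{\pi(|z|^2+|w|^2)/2}\langle S\Tilde{k}_z,\Tilde{k}_w\rangle$.

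Next I would compute $\langle S\Tilde{k}_z,\Tilde{k}_w\rangle$ directly from the stated integral form of $S$ in terms of $\psi$. Applying $S$ to $\Tilde{k}_z$ and pairing against $\Tilde{k}_w$ using the reproducing property of $\mathcal{F}^p_k(\mathbb{C}^d)$—whose kernel already carries the Laguerre factor $L_{k-1}$ that matches the $\psi$ appearing in \cref{polyfocksob}—should collapse the double integral to a closed expression, yielding
\begin{align*}
    e^{\pi(|z|^2+|w|^2)/2}\langle S\Tilde{k}_z,\Tilde{k}_w\rangle_{\mathcal{F}^{\infty}_k,\mathcal{F}^1_k}=e^{\pi w\overline{z}}\,\psi(\overline{z}-w,z-w),
\end{align*}
the exact polyanalytic counterpart of \cref{gabcond}. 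Combining with the phase identity then gives $|\langle T\pi(\overline{z})\varphi_k,\pi(\overline{w})\varphi_k\rangle| = |e^{\pi w\overline{z}-\pi(|z|^2+|w|^2)/2}\psi(\overline{z}-w,z-w)|$, and the hypothesised bound by $H(z-w)$ is precisely the statement that this Gabor matrix is dominated by an integrable function of $z-w$, so \cite{Gr06} applies.

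The main obstacle I expect is the bookkeeping in the direct computation of $\langle S\Tilde{k}_z,\Tilde{k}_w\rangle$: unlike the analytic case, the kernel now carries the Laguerre polynomial $L_{k-1}(\pi|z-w|^2)$, and one must verify that the $\psi$-form of $S$ interacts with this kernel so that the surviving argument is exactly $(\overline{z}-w,z-w)$ rather than some permutation of holomorphic and antiholomorphic variables. Getting the two arguments of $\psi$ correctly matched to the two shift parameters—and checking that the Gaussian and cross-term exponents assemble into the clean factor $e^{\pi w\overline{z}-\pi(|z|^2+|w|^2)/2}$—is where the calculation is most error-prone. Everything else is a transcription of the $k=0$ argument, since the reduction to \cite{Gr06} on $M^p(\mathbb{R}^d)$ is window-agnostic: Gröchenig's result holds for any Hermite (indeed any $M^1$) window, so replacing $\varphi_0$ by $\varphi_k$ requires no new input.
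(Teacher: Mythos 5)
Your overall skeleton matches the paper's proof: identify $\langle S\Tilde{k}_z,\Tilde{k}_w\rangle$ with the Gabor matrix $\langle T\pi(\overline{z})\varphi_k,\pi(\overline{w})\varphi_k\rangle$ of $T=(B^k)^*SB^k$ up to phases and Gaussian weights, and then invoke Gr\"ochenig's criterion \cite{Gr06}. The genuine gap is in your central step, where you claim that applying $S$ in its $\psi$-form to $\Tilde{k}_z$ and pairing against $\Tilde{k}_w$ ``should collapse'' via the reproducing property to $e^{\pi w\overline{z}}\psi(\overline{z}-w,z-w)$ times Gaussian factors. The reproducing property can be applied only once, to the outer pairing: since $S\Tilde{k}_z\in\mathcal{F}^{\infty}_k(\mathbb{C}^d)$ by hypothesis, $\langle S\Tilde{k}_z,\Tilde{k}_w\rangle=e^{-\pi|w|^2/2}S\Tilde{k}_z(w)$, and what remains is
\begin{align*}
e^{-\pi(|z|^2+|w|^2)/2}\int_{\mathbb{C}^d}L_{k-1}(\pi|z-v|^2)\,e^{\pi(\overline{z}+\overline{w})v}\,\psi(w-v,w-\overline{v})\,e^{-\pi|v|^2}\,dv,
\end{align*}
which for a general $\psi\in\mathscr{S}'$ does \emph{not} evaluate to $\psi$ at a point: the factor $e^{\pi\overline{w}v}\psi(w-v,w-\overline{v})$ is not of reproducing-kernel form in $v$, so no second collapse is available. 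Concretely, take $d=1$, $k=1$, $\psi\equiv 1$: then $SF(z)=\int F(v)e^{\pi\overline{z}v}e^{-\pi|v|^2}\,dv=F(0)$ (a constant function), so $\langle S\Tilde{k}_z,\Tilde{k}_w\rangle=e^{-\pi(|z|^2+|w|^2)/2}$, whereas your claimed identity would give $e^{-\pi(|z|^2+|w|^2)/2}e^{\pi w\overline{z}}$. The identity you assert is therefore false at this level of generality.

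What the paper actually does is never compute with $\psi$ directly: it uses the correspondence of \cref{polyfocksob}, writing $S$ through $u\in M_c(M^p)$ as $SF=\int_{\mathbb{R}^d}u(y)\beta_yF\,dy$ (cf.\ \cref{altform}), so that $S\Tilde{k}_z(\xi)=\int_{\mathbb{R}^d}\Tilde{k}_z(\xi-y)u(y)e^{-\pi y^2/2+\pi y\xi}\,dy$ is a superposition of shifted kernels; the reproducing property then evaluates this at $w$, and only at the very end is the resulting $y$-integral \emph{recognised} as $\psi$ through its defining formula $\psi(z,w)=\int L_{k-1}(\pi|z-y|^2)u(y)e^{-\pi y^2/2-\pi yw}\,dy$. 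The exact identity is thus a consequence of this special structure of $\psi$, not of the reproducing property alone, and this detour through $u$ is the idea missing from your write-up. (Your route can be salvaged if you abandon the exact identity: bounding $|\psi(w-v,w-\overline{v})|\leq e^{\pi|w-v|^2/2}H(w-v)$ by the hypothesis and collecting the Gaussian exponents gives $|\langle S\Tilde{k}_z,\Tilde{k}_w\rangle|\leq (G*H)(w-z)$ with $G(s)=|L_{k-1}(\pi|s|^2)|e^{-\pi|s|^2/2}\in L^1(\mathbb{C}^d)$, and $G*H\in L^1(\mathbb{C}^d)$ still suffices for \cite{Gr06}; but that is an estimate, not the collapse you claim, and it is also not the paper's argument.)
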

\begin{proof}
    Again we note that \cref{modspacecond} relies on the intertwining property of $\beta$ and $B$, along with the unitarity of $B$. These are two features which extend to the polyanalytic case, and so we proceed in the same manner to find that
    \begin{align*}
        \langle S \Tilde{k}_z, \Tilde{k}_w\rangle_{\mathcal{F}_k^{\infty},\mathcal{F}_k^n} = e^{-i\pi(x\omega-y\eta)}\langle T\pi(\overline{z})\varphi_k,\pi(\overline{w})\varphi_k\rangle_{M^{\infty},M^1}.
    \end{align*}
    We find 
    \begin{align*}
        S \Tilde{k}_z(\xi) = \int_{\mathbb{R}^d}\Tilde{k}_z(\xi-y)u(y)e^{-\pi y^2/2 + \pi y\xi}\, dy
    \end{align*}
    and so
    \begin{align*}
        \langle S \Tilde{k}_z, \Tilde{k}_w\rangle_{\mathcal{F}_k^{\infty},\mathcal{F}_k^n} &= \Big\langle \int_{\mathbb{R}^d}\Tilde{k}_z(\xi-y)u(y)e^{-\pi y^2/2 + \pi y\xi}\, dy, \Tilde{k}_w \rangle_{\mathcal{F}_k^{\infty},\mathcal{F}_k^n} \\
        &= e^{-\pi |w|^2/2}\int_{\mathbb{R}^d}\Tilde{k}_z(w-y)u(y)e^{-\pi y^2/2 + \pi yw}\, dy \\
        &= e^{-\pi(|z|^2 + |w|^2)/2}\int_{\mathbb{R}^d}\pi L_k(\pi |z-w-y|^2)e^{\pi \overline{z} (w-y)} u(y)e^{-\pi y^2/2+\pi yw}\, dy \\
        &= e^{-\pi(|z|^2 + |w|^2)/2 + \pi w\overline{z}} \psi(z-w,z-\overline{w}),
    \end{align*}
    and so the result follows.

\end{proof}
These sufficient conditions for operators bounded on $\mathcal{F}_k^p$ spaces give an insight into a broader class of operators considered in \cite{CaLiShWiYa20}. The authors asked which functions $K_T(z,\overline{w})$ are the integral kernels of bounded maps on $\mathcal{F}^2(\mathbb{C}^d)$, where
\begin{align*}
    K_T(z,\overline{w}) = TK(\cdot,\overline{w})(z)
\end{align*}
for some bounded operator $T$ on $\mathcal{F}^2(\mathbb{C}^d)$. It is clear from definitions that
\begin{align*}
    \langle T \Tilde{k}_z,\Tilde{k}_w\rangle = e^{-\pi(|z|^2+|w|^2)/2} K_T(z,\overline{w}).
\end{align*}
Hence from the same line of reasoning as above, we have the following sufficient condition:
\begin{proposition}
    Let $K_T(z,\overline{w})\in \mathscr{S}'(\mathbb{C}^{2d})$ such that there exists some $H\in L^1(\mathbb{C}^d)$ satisfying
    \begin{align*}
        \big| K_T(z,\overline{w}) \big| \leq \big|e^{\pi(|z|^2+|w|^2)/2}\big|\cdot H(z-w),
    \end{align*}
    then $K_T$ is a bounded integral operator on all $\mathcal{F}^p(\mathbb{C}^d)$ spaces. In particular it is a bounded operator on the Fock space.
\end{proposition}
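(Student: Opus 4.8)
The plan is to reduce the statement to Gröchenig's off-diagonal decay criterion \cite{Gr06}, following the identical route used in \cref{modspacecond} and \cref{modspacecondpoly}. The one simplification here is that the hypothesis is already phrased directly in terms of the kernel $K_T$, so the explicit matrix-coefficient computations carried out in those proofs are replaced by the identity noted immediately before the proposition, namely $\langle T\Tilde{k}_z,\Tilde{k}_w\rangle = e^{-\pi(|z|^2+|w|^2)/2}K_T(z,\overline{w})$, which holds by the definition of the integral operator associated to $K_T$ and the normalisation $\Tilde{k}_z=e^{-\pi|z|^2/2}k_z$.

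First I would transfer the computation to the modulation-space side. Writing $z=x+i\omega$, $w=y+i\eta$, setting $\Tilde{T}=B^*TB$, and using the intertwining identity $\Tilde{k}_z = e^{-i\pi x\omega}B(\pi(\overline{z})\varphi_0)$ together with the unitarity of $B$ — exactly as in the derivation of the relation \cref{gabmatr} — the matrix coefficient $\langle T\Tilde{k}_z,\Tilde{k}_w\rangle$ equals the Gabor matrix $\langle \Tilde{T}\pi(\overline{z})\varphi_0,\pi(\overline{w})\varphi_0\rangle$ up to the unimodular phase $e^{-i\pi(x\omega-y\eta)}$. Taking absolute values, the phase factor drops out and I obtain $|\langle \Tilde{T}\pi(\overline{z})\varphi_0,\pi(\overline{w})\varphi_0\rangle| = e^{-\pi(|z|^2+|w|^2)/2}|K_T(z,\overline{w})|$. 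Substituting the hypothesis $|K_T(z,\overline{w})|\le |e^{\pi(|z|^2+|w|^2)/2}|\,H(z-w)$ and observing that the exponent is real so the two Gaussian factors cancel, I arrive at the clean off-diagonal bound $|\langle \Tilde{T}\pi(\overline{z})\varphi_0,\pi(\overline{w})\varphi_0\rangle|\le H(z-w)$.

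Finally, since complex conjugation is an isometry of $\mathbb{R}^{2d}$ one has $|z-w|=|\overline{z}-\overline{w}|$, so $H(z-w)$ may be viewed as an $L^1(\mathbb{R}^{2d})$ function of the phase-space difference $\overline{z}-\overline{w}$; Gröchenig's theorem \cite{Gr06} then gives boundedness of $\Tilde{T}$ on every $M^p(\mathbb{R}^d)$, $1\le p\le\infty$. Because $B$ is an isomorphism $M^p\to\mathcal{F}^p$ and $T=B\Tilde{T}B^*$, the integral operator with kernel $K_T$ is bounded on every $\mathcal{F}^p(\mathbb{C}^d)$, in particular on $\mathcal{F}^2(\mathbb{C}^d)$. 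I expect no genuinely new analytic difficulty: the argument is the same Gabor-matrix machinery already deployed twice. The only point requiring care, and the one I would state explicitly, is the bookkeeping matching the complex argument $z-w$ of the dominating function with the phase-space difference $\overline{z}-\overline{w}$ demanded by the criterion, together with verifying that the unimodular prefactor truly cancels under the modulus; both are routine once the matrix-coefficient identity is in hand, so the content of the proposition is precisely that the general-kernel hypothesis plugs directly into the established framework.
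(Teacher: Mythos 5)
Your proposal is correct and takes essentially the same approach the paper intends: the paper offers no separate proof for this proposition, deriving it ``from the same line of reasoning as above'' (the matrix-coefficient identity $\langle T\Tilde{k}_z,\Tilde{k}_w\rangle = e^{-\pi(|z|^2+|w|^2)/2}K_T(z,\overline{w})$, transfer to the Gabor matrix of $B^*TB$ via the intertwining relation, Gr\"ochenig's off-diagonal decay criterion on $M^p(\mathbb{R}^d)$, and return through the isomorphism $B:M^p\to\mathcal{F}^p$), which is precisely the argument you spelled out. Your explicit bookkeeping point --- that $H$ composed with complex conjugation remains in $L^1(\mathbb{R}^{2d})$, so the bound is genuinely of the form required by the criterion --- is left implicit in the paper but is handled correctly in your write-up.
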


\section{Examples}
We consider here some examples of operators bounded on different Sobolev-Fock spaces.
\begin{example}\normalfont
The motivating example of \cite{Zh15} was the unitary equivalent of the Hilbert transform to the operator on the Fock space;
\begin{align*}
    S_{A}F := BHB^* F = \int_{\mathbb{C}^d} F(w)e^{\pi z \overline{w}}A\Big(\frac{z-\overline{w}}{\sqrt{2}}\Big)e^{-\pi |w|^2}\, dw,
\end{align*}
where $A(z)$ is the antiderivative of $e^{u^2}$ at $z$ satisfying $A(0)=0$. It was noted in \cite{CaLiShWiYa20} that $S_A$ is \textit{not} bounded on the spaces defined as the analytic $F$ such that
\begin{align*}
    \int_{\mathbb{C}^d} |F(z)|^p e^{-\pi |z|^2}\, dz < \infty
\end{align*}
for $1\leq p<2$. The authors remark that one may consider the Sobolev-Fock space treated in this work, but that $S_H$ would not be well-defined for $p>2$, however associating the Bargmann transform with the STFT shows this is not an issue. With our definition of the Sobolev-Fock spaces, we can use the  specific instance of Theorem 1 of \cite{BeGrOkRo07}, which states that the Hilbert transform is bounded on $M^p(\mathbb{R}^d)$ for $1<p<\infty$. From this it follows that $S_H$ is bounded on all $\mathcal{F}^p(\mathbb{C}^d)$ for $1<p<\infty$. By \cref{modspacecond}, the failure of $S_H$ to be bounded on the endpoints $\mathcal{F}^1$, $\mathcal{F}^{\infty}$ informs us that $A(z-\overline{w})e^{-\pi(|z| + |w|)/2-\pi\overline{z}w}$ cannot be dominated by $H(z-w)$ for some $H\in L^1(\mathbb{C}^d)$, otherwise by \cref{modspacecond} $S_H$ would be bounded for $p=1,\infty$.
\end{example}

\begin{example}\normalfont
We now consider an example of a convolution operator which is bounded on all modulation spaces, whilst being defined only on $L^2(\mathbb{R}^d)$ of the Lesbesgue spaces. In \cite{BeGrOkRo07} the authors proved that $e^{-i t\pi y^2}$ defines a bounded convolution operator on $M^p(\mathbb{R}^d)$ for all $1\leq p \leq \infty$ and real constant $t$, yet fails to be bounded on any $L^p(\mathbb{R}^d)$ for $p\neq 2$. Constructing the corresponding operator for the $t=1$ case on Sobolev-Fock spaces gives
\begin{align*}
    SF(z) = \int_{\mathbb{C}^d}F(w)e^{\pi z\overline{w}}\phi(z-\overline{w})e^{-\pi |w|^2}\, dw,
\end{align*}
where
\begin{align*}
    \phi(z) = \int_{\mathbb{R}^d} e^{\pi y((i+1/2)y - z)} \, dy.
\end{align*}
Hence the operator 
\begin{align*}
    SF(z) = \int_{\mathbb{R}^d} \int_{\mathbb{C}^d} F(w)e^{\pi (z\overline{w} + c_1 (z-\overline{w})^2 - |w|^2)}\, dw
\end{align*}
is bounded on all $\mathcal{F}^p(\mathbb{C}^d)$ spaces for $1\leq p \leq \infty$. The complex constant $c_1$ can be manipulated by choice of $t$.
\end{example}

\section*{Acknowledgements}
The author would like to thank Franz Luef and Monika Dörfler for their helpful comments.

\bibliography{refs}
\bibliographystyle{abbrv}

\end{document}